\numberwithin{equation}{section}
\newtheorem{theorem}{Theorem}[section]
\newtheorem{lemma}[theorem]{Lemma}
\newtheorem{remark}[theorem]{Remark}
\newtheorem*{theorem*}{Theorem}
\newtheorem*{lemma*}{Lemma}
\newtheorem*{proposition*}{Proposition}
\newtheorem*{corollary*}{Corollary}
\renewcommand\tilde{\widetilde}
\def\R{\mathbb{R}}
\def\Z{\mathbb{Z}}
\def\N{\mathbb{N}}
\def\EE{\mathbb{E}}
\def\P{\mathbb{P}}
\renewcommand{\phi}{\varphi}
\def\1{\mathbf{1}}
\def\XXint#1#2#3{{\setbox0=\hbox{$#1{#2#3}{\int}$ }
\vcenter{\hbox{$#2#3$ }}\kern-.57\wd0}}
\def\eps{\varepsilon}
\def\les{\lesssim}
\def\ges{\gtrsim}
\newcommand{\bra}[1]{\left( #1 \right)}
\newcommand{\sqa}[1]{\left[ #1 \right]}
\newcommand{\cur}[1]{\left\{ #1 \right\}}
\def\leb{\mathsf{Leb}}
 \newcommand{\abs}[1]{\left\vert#1\right\vert}
\newcommand{\id}{\mathsf{id}}
\newcommand{\mres}{\mathbin{\vrule height 1.6ex depth 0pt width
0.13ex\vrule height 0.13ex depth 0pt width 1.3ex}}
\begin{document}
\title[There is no stationary c.~m.\ Poisson Matching in 2d]{There is no stationary cyclically monotone Poisson matching in 2d}

\author{  Martin Huesmann \address[Martin Huesmann]{Universit\"at M\"unster,  Germany} \email{martin.huesmann@uni-muenster.de} \hspace*{0.5cm} Francesco Mattesini \address[Francesco Mattesini]{Universit\"at M\"unster \& MPI Leipzig,  Germany} \email{francesco.mattesini@uni-muenster.de}  \hspace*{0.5cm} Felix Otto \address[Felix Otto]{MPI Leipzig, Germany} \email{Felix.Otto@mis.mpg.de} } 
\thanks{All authors are supported by the Deutsche Forschungsgemeinschaft (DFG, German Research Foundation) through the SPP 2265 {\it Random Geometric Systems}. MH and FM have been funded by the Deutsche Forschungsgemeinschaft (DFG, German Research Foundation) under Germany's Excellence Strategy EXC 2044 -390685587, Mathematics M\"unster: Dynamics--Geometry--Structure .  }

\begin{abstract}
 We show that there is no cyclically monotone stationary
 matching of two independent Poisson processes in dimension $d=2$. The proof combines the harmonic approximation result from \cite{GHO} with local asymptotics for the two-dimensional matching problem for which we give a new self-contained proof using martingale arguments.
\end{abstract}

\date{\today}
\maketitle

\section{Introduction}

Consider two locally finite\footnote{and thus countable}
point sets $\{X\},\{Y\}\subset\mathbb{R}^d$ in $d$-dimensional
space. We are interested in their matching, which we think of as a bijection $T$ from
$\{X\}$ onto $\{Y\}$. More specifically, we are interested in their matching by
cyclically monotone maps $T$,
which means that for any finite subset\footnote{which we momentarily enumerate} 
$\{X_n\}_{n=1}^{N}$ we have
\begin{align}\label{fw01}
\sum_{n=1}^N T(X_n)\cdot(X_n-X_{n-1})\ge 0\quad\mbox{with}\quad X_0:=X_N.
\end{align}
It is elementary to see that (\ref{fw01}) is equivalent to local optimality, meaning
\begin{align}\label{eq:loc opt}
\sum_X(|T(X)-X|^2-|\tilde T(X)-X|^2)\le 0
\end{align}
for any other bijection $\tilde T$ that differs from $T$ only on a finite number
of points\footnote{so that the sum is effectively finite}. 
This makes a connection to the optimal transportation between the
measures
\begin{equation}\label{eq:defmunu}
\mu=\sum_{X}\delta_X \quad \mbox{and} \quad \nu=\sum_{Y}\delta_Y
\end{equation}
related via $T\#\mu=\nu$, 
which we shall explore in this paper. Note however
that because of the (typically) infinite number of points, we cannot view $T$ as a minimizer. 

\medskip

Before proceeding to the random setting, we make two simple observations
that show that the set of $(\{X\},\{Y\},T)$ is rich:
For $d=1$, (\ref{fw01}) is easily seen to be equivalent to plain monotonicity. 
Every single matching $T(X_0)=Y_0$
can obviously be extended in a unique way to a monotone bijection $T$ of $\{X\}$ and $\{Y\}$, 
so that for $d=1$, the set of monotone bijections $T$ has the same magnitude as $\{X\}$ itself.
Returning to general $d$, we note that for any 
monotone bijection $T$ of $\{X\}$ and $\{Y\}$, and for any two shift vectors 
$\bar x,\bar y\in\mathbb{R}^d$,
the map $x\mapsto T(x-\bar x)+\bar y$ is a monotone bijection of the shifted point sets
$\{\bar x+X\}$ and $\{\bar y+Y\}$.

\medskip

We are interested in the situation when the sets $\{X\}$, $\{Y\}$ and their cyclically monotone
bijection $T$ are random. More precisely, we consider the case when $\{X\}$ and $\{Y\}$
are independent Poisson point processes of unit intensity. We assume that the $\sigma$-algebra
for $(\{X\},\{Y\},T)$ is rich enough so that the following elementary observables
are measurable, namely the number $N_{U,V}$ of matched pairs $(X,Y)\in U\times V$ for
any two Lebesgue-measurable sets $U,V\subset\mathbb{R}^d$ (with $U$ or $V$ having finite Lebesgue 
measure\footnote{so that the following number is finite}):
\begin{align*}
N_{U,V}:=\#\{(X,Y)\in U\times V|Y=T(X)\}\in\{0,1,\cdots\}.
\end{align*}
We now come to the crucial assumption on the ensemble: 
In view of the above remark, the additive group 
$\mathbb{Z}^d\ni\bar x$ acts on $(\{X\},\{Y\},T)$ via
\begin{align*}
(\{X\},\{Y\},T)\mapsto(\{\bar x+X\},\{\bar x+Y\},T(\cdot-\bar x)+\bar x).
\end{align*}
We assume that this action is stationary and ergodic\footnote{Recall that this is
automatic for the two first components $(\{X\},\{Y\})$ by the properties of the
Poisson point process so that this amounts to a property of $T$}. On the one hand, stationarity is
a structural assumption; we shall only use it in following form: For any shift vector $\bar x$,
the random natural numbers $N_{\bar x+U,\bar x+V}$ and $N_{U,V}$ have the same distribution. 
On the other hand, ergodicity is a qualitative assumption\footnote{paraphrased by saying
that spatial averages are ensemble averages}; we will only use it in form of
the following application of Birkhoff's ergodic theorem:
\begin{align*}
\lim_{R\uparrow\infty}\frac{1}{R^d}\sum_{\bar x\in\mathbb{Z}^d\cap[0,R)^d}N_{\bar x+U,\bar x+V}
=\mathbb{E}N_{U,V}\quad\mbox{almost surely}.
\end{align*}

\medskip

\begin{theorem} \label{thm:main}
For $d\le 2$, there exists no stationary and ergodic ensemble
of $(\{X\},$ $\{Y\},T)$, where $\{X\}$, $\{Y\}$ are independent Poisson point processes
and $T$ is a cyclically monotone bijection of $\{X\}$ and $\{Y\}$.
\end{theorem}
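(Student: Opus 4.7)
The plan is to argue by contradiction: assume that for $d = 2$ a stationary ergodic ensemble $(\{X\},\{Y\},T)$ with $T$ cyclically monotone exists, and combine a sharp local lower bound on the 2D matching cost with the harmonic approximation of \cite{GHO} to derive an inconsistency. The one-dimensional case is classical, essentially because cyclic monotonicity then forces $T$ to be genuinely monotone on $\R$ while the signed discrepancy $M(x) - N(x)$ between the two Poisson counting functions is a recurrent random walk whose fluctuations are incompatible with a stationary pointwise bijection.

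For $d=2$, write $D(X) := T(X) - X$, $Q_R := [0,R)^2$, and
\[
\mathcal{C}(R) \ := \ \sum_{X \in Q_R} |D(X)|^2.
\]
The displacement field $D$ inherits stationarity and ergodicity from the assumption, so Birkhoff's theorem yields $R^{-2} \mathcal{C}(R) \to \bar c$ almost surely for some $\bar c \in [0, \infty]$.

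\emph{Step 1 (local lower bound via martingales).} The first task is to show that for any cyclically monotone matching of two independent 2D Poisson processes,
\[
\mathcal{C}(R) \ \gtrsim \ R^2 \log R \qquad \text{with high probability as } R \to \infty.
\]
The strategy is a dyadic decomposition of $Q_R$ into squares $Q$ of side $2^k$, $k = 1, \ldots, \log_2 R$. On each such square the signed Poisson mass discrepancy $\mu(Q) - \nu(Q)$ has standard deviation of order $2^k$; cyclic monotonicity (equivalently local $L^2$-optimality, see \eqref{eq:loc opt}) forces $T$ to transport this discrepancy within $Q$ at $L^2$ cost of order $2^{2k}$; and summing the contributions of the $\log_2 R$ dyadic scales via a martingale concentration argument that exploits the independence of the Poisson increments on well-separated cells produces the extra $\log R$ factor. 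This is the self-contained \emph{local asymptotics} advertised in the abstract.

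\emph{Step 2 (harmonic approximation and closing the argument).} If $\bar c < \infty$ then the Birkhoff limit forces $\mathcal{C}(R) \sim \bar c R^2$, immediately contradicting Step~1. The substantial case $\bar c = \infty$ is where the GHO harmonic approximation enters: on each box $Q_R$ it represents $D$ as $\nabla \phi_R$ up to a controllable $L^2$ error, where $\phi_R$ solves a Poisson equation driven by the density fluctuation $\mu - 1$. Comparing this representation on two well-separated dyadic scales $R_1 \ll R_2$, the difference $\nabla \phi_{R_2} - \nabla \phi_{R_1}$ must carry the Poisson fluctuations in the annulus, whose $L^2$ norm per unit area grows like $\sqrt{\log(R_2/R_1)}$ (the 2D Green's-function log). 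Stationarity of $D$, however, forces these harmonic representatives to be compatible with a single translation-invariant distribution, which rules out the unbounded scale-dependent growth of the harmonic part and yields the desired contradiction.

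The main obstacle is precisely the $\bar c = \infty$ regime: stationarity alone gives no upper bound on $\mathcal{C}(R)$, so one must extract structural information from cyclic monotonicity through GHO and combine it quantitatively with stationarity. Concretely, this requires a version of the harmonic approximation uniform across all dyadic scales together with a stationary covariance estimate on the harmonic representatives $\nabla \phi_R$, to turn the logarithmic 2D fluctuation asymptotics of Step~1 into a genuine incompatibility with the existence of a stationary displacement field $D$.
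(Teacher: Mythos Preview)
Your overall architecture—a local lower bound on the 2D matching cost contradicting an upper bound obtained via the harmonic approximation of \cite{GHO}—is the paper's. Your Step~1 is essentially Lemma~\ref{lem:lowerboundpathwise}, though note that the lower bound holds for \emph{any} bijection; cyclic monotonicity plays no role there, it is a pure $W_1$-duality/martingale argument.

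The genuine gap is your Step~2 in the case $\bar c=\infty$. Two separate problems:

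\textbf{(i) You cannot trigger GHO.} The harmonic approximation theorem of \cite{GHO} has a smallness hypothesis: it applies only once $R^{-2}E(6R)\le\eps$ for a fixed small $\eps$. When $\bar c=\infty$ you have no such bound a priori, and your proposal gives no mechanism to produce one. The paper's route is the chain $L^0\Rightarrow L^\infty$: ergodicity alone yields that the fraction of points in $(-R,R)^2$ with $|D(X)|>L$ is $o(1)$ for large $L$ (Lemma~\ref{lem:L0}), and then \emph{monotonicity} of $T$—used pointwise against $d+1$ well-placed neighbours of moderate displacement—upgrades this to the deterministic bound $|D(X)|\le\eps R$ on $(-R,R)^2$ for $R$ large (Lemma~\ref{lem:Linfty}). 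This $L^\infty$ estimate is what makes $E(6R)=o(R^2)$ and unlocks GHO. You have no substitute for this step, and your vague appeal to ``stationarity of $D$ forcing compatibility of the $\nabla\phi_R$'' does not supply one.

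\textbf{(ii) Even with GHO, the $L^2$ bounds do not clash.} Iterating GHO across scales gives $E(R)\le C\ln R$ (Lemma~\ref{lem:harmonicapprox}), but your Step~1 lower bound is also $E(R)\gtrsim\ln R$; these are consistent, not contradictory. The paper closes the argument by dropping to $L^1$: interpolating $E(R)\le C\ln R$ with the $L^0$ estimate once more (Cauchy--Schwarz on the set $\{|D(X)|>L\}$) yields $R^{-2}\sum|D(X)|\le o(\ln^{1/2}R)$ (Lemma~\ref{lem:upperbound}), which now genuinely contradicts the $L^1$ lower bound $R^{-2}\sum|D(X)|\gtrsim\ln^{1/2}R$. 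Your two-scale comparison of harmonic representatives is not how the paper proceeds—the harmonic gradient $\nabla\Phi$ is used only locally, as a correction to be discarded via \eqref{eq:L2ergodic}, never as a global stationary object—and as written it does not yield a contradiction.

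In short: your case $\bar c<\infty$ is fine (and is essentially the easy observation that a finite-cost stationary matching is ruled out by \cite{AKT84}), but the substance of the theorem is precisely the case $\bar c=\infty$, where you are missing both the $L^0\to L^\infty$ bootstrap that makes GHO applicable and the $L^2\to L^1$ interpolation that turns its output into a contradiction.
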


\medskip

Our interest in this problem is motivated on the one hand by work on geometric properties of matchings by Holroyd \cite{Ho11} and Holroyd et al. \cite{HoPePeSc09, HoJaWa20}, and on the other hand by work on optimally coupling random measures by the first author and Sturm \cite{HuSt13} and the first author \cite{Hu16}. 
In \cite{HoJaWa20}, Holroyd, Janson, and W\"astlund analyze (stationary) matchings satisfying the local optimality condition \eqref{eq:loc opt} with the exponent 2 replaced by $\gamma\in[-\infty,\infty].$ They call matchings satisfying this condition $\gamma$-minimal and derive a precise description of the geometry of these matchings in dimension $d=1$. In dimension $d>1$ much less is known. In particular, in the critical dimension $d=2$  they could only show existence of stationary $\gamma$-minimal matchings for $\gamma<1$. 
The cases $\gamma\geq 1$ were left open, but see \cite{HoPePeSc09} and \cite{Ho11} for several  open questions for $d=2$ and in particular $\gamma=1$.
On the other hand, the first author and Sturm \cite{HuSt13, Hu16} develop an optimal transport approach to this (and related) problems. They identify the point sets $\{X\},\{Y\}$ with the counting measures \eqref{eq:defmunu} and seek a stationary coupling $Q$ between $\mu$ and $\nu$ minimizing the cost 
\begin{align*}
 \EE\int_{B_1\times \R^d} |x-y|^\gamma dQ.
\end{align*}
(Note that any (stationary) bijection $T:\{X\}\to\{Y\}$ induces a (stationary) coupling between $\mu$ and $\nu$ by setting $Q=(\id,T)_\#\mu$.) 
If this cost functional is finite there exists a stationary coupling which is necessarily locally optimal in the sense of \eqref{eq:loc opt} with the exponent 2 replaced by $\gamma$. In dimension 2 for $\mu$ and $\nu$ being two independent Poisson processes, this functional is finite if and only if $\gamma<1$, which is in line with the results of \cite{HoJaWa20}.

\medskip

In view of these results, it is natural to conjecture that Theorem \ref{thm:main} also holds for $\gamma$-minimal matchings with $\gamma\geq 1$. However, our proof crucially relies on the harmonic approximation result of \cite{GHO} which so far is only available for $\gamma=2$.

\medskip

Before we explain the main steps of the proof of Theorem \ref{thm:main} in Section \ref{sec:mainsteps} we would like to give a few remarks on extensions and variants of Theorem \ref{thm:main}.

\begin{remark}
Theorem \ref{thm:main} remains true if we replace the bijection $T$ by the a priori more general object of a stationary coupling $Q$.
This can be seen by either using that matchings are extremal elements in the set of all couplings of point sets or by directly writing the proof in terms of couplings which essentially only requires notational changes.
\end{remark}

\begin{remark}
Very well studied siblings of stationary matchings are stationary allocations of a point process $\{X\}$, i.e.\ a stationary map $T:\R^d\to\{X\}$ such that $\mathsf
{Leb}(T^{-1}(X))$ equals $\EE[\#\{X\in(0,1)^d\}]^{-1}$.
There are several constructions of such an allocation, for instance by using the stable marriage algorithm in \cite{HoHoPe06}, the flow lines of the gravitational force field exerted by $\{X\}$ in \cite{ChPePeRo10}, an adaptation of the AKT scheme in \cite{MaTi16}, or by optimal transport methods in \cite{HuSt13}.

By essentially the same proof as for Theorem \ref{thm:main}, one can show that in $d=2$ there is no cyclical montone stationary allocation to a Poisson process. The only place where we need to change something in the proof is the $L^\infty$ estimate Lemma \ref{lem:Linfty}.
\end{remark}

\begin{remark}
We do not use many particular features of the Poisson measures $\mu$ and $\nu$ in the proof of Theorem \ref{thm:main} since ergodicity and stationarity allow us to argue on a pathwise level via the harmonic approximation result (cf.\ Section \ref{sec:mainsteps}).

We use two properties of the Poisson measure. The first property is concentration around the mean. The second property is more involved. Denote by $W_p$ the $L^p$ Wasserstein distance. We use that 
$\frac1{R^d}W_2(\mu \mres B_R,\frac{\mu(B_R)}{|B_R|}\mathsf{Leb})$ diverges at the same rate for $R\to\infty$ as $\frac{1}{R^d} W_{2-\eps}(\mu \mres B_R,\frac{\mu(B_R)}{|B_R|}\mathsf{Leb})$
for some $\eps>0$ (here we use $\eps=1$). 
\end{remark}

As the last remark indicates stationary matchings are closely related to the bipartite matching problem, which is the natural variant of the problem studied in this paper with only a finite number of points $\{X_1,\ldots,X_n\},\{Y_1,\ldots,Y_n\}$. Note that then the local optimality condition \eqref{eq:loc opt} turns into a global optimality condition\footnote{which in measure theoretic formulation used in optimal transport is nothing but the cyclical mononotonicity of the support of the corresponding coupling.}. This (finite) bipartite matching problem has been the subject of intense research in the last 30 years, see e.g.\ \cite{AKT84} for the first proof of the rate of convergence in the case of iid points in dimension $d=2$, \cite{Ta94} for sharp integrability properties for matchings in $d\geq 3$, \cite{CaLuPaSi14} for a new appraoch based on the linearization of the Monge-Amp\`ere equation, and \cite{BobLe} for a thorough analysis of the bipartite matching problem in $d=1$.

\medskip

We rely on this connection in two ways. On the one hand we use the asymptotics of the cost of the bipartite matching which are known since \cite{AKT84}. However, we need a local version for which we give a self-contained proof using martingale techniques (see Section \ref{sec:bounds}) which is new and interesting on its own. On the other hand we exploit a large scale regularity result for optimal couplings, the harmonic approximation result, developed in \cite{GHO}. This regularity result was inspired by the PDE approach proposed by \cite{CaLuPaSi14}, see also \cite{AmStTr16, AGS19, Le17, GolTre} for remarkable results for the bipartite matching problem using this approach.

\subsection{Main steps in the proof of Theorem \ref{thm:main}}\label{sec:mainsteps}

In the following we will describe the main steps in the proof of Theorem \ref{thm:main}. For the detailed proofs we refer to Section \ref{sec:proofs}.

\medskip

We argue by contradiction. We assume that there is a locally optimal stationary matching $T$ between $\cur{X}$ and $\cur{Y}$. On the one hand, we will show that
\begin{equation}\label{eq:upperbound}
 \frac1{R^d}\sum_{X \in B_R\;\mbox{or}\;T(X) \in B_R} |T \bra{X} - X| \leq  o(\ln^{\frac12}R).
\end{equation}
On the other hand, it is known (we will prove the local version needed for our purpose) that any bipartite matching satisfies
\begin{equation}\label{eq:lowerbound}
 \frac1{R^d}\sum_{X \in B_R\;\mbox{or}\;T(X) \in B_R} |X-T \bra{X}| \geq O(\ln^{\frac12}R) 
\end{equation}
leading to the desired contradiction. 

\medskip
Let us now describe the different steps leading to \eqref{eq:upperbound} and \eqref{eq:lowerbound} in more detail. 
Our starting point is the observation  that
by stationarity and ergodicity the following $L^0$-estimate on the displacement $T \bra{X} - X$ holds
 \begin{equation}\label{eq:L0Sketch}
\# \cur{X \in (-R,R)^d \ \colon \ \abs{T \bra{X} - X} \gg 1} \leq o (R^d),
 \end{equation}
see Lemma \ref{lem:L0} for a precise statement.
Since $T$ is locally optimal, its support is in particular monotone, which means that for any $X, X' \in \cur{X}$ we have
$$(T \bra{X'} - T \bra{X})\cdot (X'-X) \geq 0.$$
By \eqref{eq:L0Sketch}, we know that most of the points in $(-R,R)^d$ are not transported by a large distance. Combining this with monotonicity allows us to also shield the remaining points  from being transported by a distance of order $R$, so that
 \begin{equation}\label{eq:LinftySubSketch}
   \abs{T \bra{X} - X} \leq  o(R)\; \mbox{provided that} \; X \in (-R,R)^d,
 \end{equation}
see Lemma \ref{lem:Linfty} for a precise statement. By concentration properties of the Poisson process we may assume that $\frac{\# \cur{X \in B_R}}{\abs{B_R}}\in \sqa{\frac12,2}$ for $R\gg 1$. 
Summing \eqref{eq:LinftySubSketch} over $B_R$ we obtain 
\begin{equation}\label{eq:oR2}
 \frac{1}{R^d} \sum_{X \in B_R} |T \bra{X} - X|^2 \leq o(R^2).
\end{equation}
Now comes the key step of the proof. We want to exploit regularity of $T$    
to upgrade \eqref{eq:oR2} to an $O(\ln R)$ bound. The  tool that allows us to do this is the harmonic approximation result \cite[Theorem 1.4]{GHO} which quantifies the closeness of the displacement $T \bra{X} - X$ 
to a harmonic gradient field $\nabla \phi$, taking as input only the local energy
\begin{equation} \label{eq:energy}
  E(R) :=\frac{1}{R^d} \sum_{X \in B_R \ \mbox{or} \ T \bra{X} \in B_R} |T \bra{X} - X|^2
\end{equation}
and the distance of $\mu \mres (-R,R)^d$ and $\nu \mres (-R,R)^d$ to the Lebesgue measure on $B_R$
\begin{equation}\label{eq:data}
D(R) := \frac{1}{R^d}W^2_{(-R,R)^d}(\mu,n_\mu) + \frac{R^2}{n_\mu}(n_\mu-1)^2 +\frac{1}{R^d}W^2_{(-R,R)^d}(\nu,n_\nu) + \frac{R^2}{n_\nu}(n_\nu-1)^2,
\end{equation}
where $n_\mu = \frac{\# \cur{X \in B_R}}{\abs{B_R}}$ and $n_\nu = \frac{\# \cur{Y \in B_R}}{\abs{B_R}}$, and $W_B \bra{\mu, n} = W_2 \bra{\mu \mres B, n \leb \mres B}$. By \eqref{eq:oR2} (together with its counter part arising from exchanging the roles of $\cur{X}$ and $\cur{Y}$) we have $E(R)\leq o(R^2)$. 
By the well-known bound for the matching problem in $d=2$ and by concentration properties of the Poisson process we have $D(R)\leq O(\ln R)$, see Lemma \ref{lem:upperboundpathwise}. Iteratively exploiting the harmonic approximation result on an increasing sequence of scales we obtain that the local energy $E$ inherits the asymptotic of $D$:
\begin{equation}\label{eq:L2statementSketch}
\frac{1}{R^d}\sum_{X \in B_R \; \mbox{or} \; T \bra{X} \in B_R} |T\bra{X}  - X|^2  \leq  O(\ln R),
\end{equation}
see Lemma \ref{lem:harmonicapprox}. 
Combining this with the $L^0$-estimate yields 
$$ \frac{1}{R^d} \sum_{X\in B_R \;\mbox{or}\; T\bra{X} \in B_R}|T \bra{X} - X|  \leq  o ( \ln^{\frac12} R),$$
see Lemma \ref{lem:upperbound}.

\medskip

It remains to establish the lower bound \eqref{eq:lowerbound}, which is essentially known. However, for our purpose we need the local version \eqref{eq:lowerbound}. Our proof is very similar to the proof of the lower bound in the seminal paper \cite{AKT84}. Both approaches construct candidates for the dual problem based on dyadic partitions of the cube. However, instead of using a quantitative embedding result into a Gaussian process as in \cite{AKT84} we use a natural martingale structure together with a concentration argument. More precisely, we show that there exists $\zeta$ with ${\rm supp} \zeta \in \bra{0,R}^2$ and $\abs{\nabla \zeta} \leq 1$ such that
\[
\frac1{R^d} \sum_{X \in (0,R)^2\;\mbox{or}\;T(X) \in (0,R)^2}  \zeta \bra{T \bra{X}} - \zeta \bra{X}  \geq O(\ln^\frac12 R),
\]
see Lemma \ref{lem:lowerboundpathwise}. Note that this is sufficient for the contradiction stated at the beginning of the subsection, indeed
\[
\begin{split}
O(\ln^\frac12 R) & \leq \frac1{R^d} \sum_{X \in (0,R)^2\;\mbox{or}\;T(X) \in (0,R)^2}  \zeta \bra{T \bra{X}} - \zeta \bra{X} \\
&  \leq \frac1{R^d}\sum_{X \in (0,R)^2\;\mbox{or}\;T(X) \in (0,R)^2} \abs{T \bra{X} - X} \\
& \leq  \frac1{R^d}\sum_{X \in B_{\sqrt{2}R}\;\mbox{or}\;T(X) \in B_{\sqrt{2}R}} \abs{T \bra{X} - X} \\
& \leq o(\ln^\frac12 R).
\end{split}
\]




\section{Proofs}\label{sec:proofs}
\subsection{The ergodic estimate}
\begin{lemma}\label{lem:L0}
 For any $\eps>0$ there exist a deterministic $L$ and a random radius $r_* < \infty$ a.~s. such that for all $R\geq r_*$
 \begin{equation}\label{eq:L0}
 \# \cur{X \in (-R,R)^d \,|\,\abs{T \bra{X} - X} > L} \leq \bra{\eps R}^d. 
 \end{equation}
\end{lemma}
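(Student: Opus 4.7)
The plan is to deduce \eqref{eq:L0} from Birkhoff's ergodic theorem applied to the count of ``far''-transported points per unit cell, combined with dominated convergence as the threshold $L \to \infty$.

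Concretely, I would introduce
\[
  f_L := \#\cur{X \in [0,1)^d : \abs{T(X) - X} > L},
\]
which is measurable because the open set $\cur{(x,y) \in \R^{2d} : x \in [0,1)^d,\ \abs{y-x} > L}$ can be written as a countable disjoint union of product rectangles $U_i \times V_i$ (e.g.\ via a Whitney decomposition in $\R^{2d}$, regrouped as $\R^d \times \R^d$), so that $f_L = \sum_i N_{U_i,V_i}$. By stationarity the shift $f_L \circ \tau_{\bar x}$ counts matched pairs with $X \in \bar x + [0,1)^d$ at displacement exceeding $L$; summing over $\bar x \in \Z^d \cap [-R,R)^d$ for integer $R$ recovers the count on $[-R,R)^d$. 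Birkhoff's theorem, in the form quoted in the introduction and extended from individual $N_{U,V}$ to the integrable countable sum $f_L$, then yields
\[
  \frac{1}{(2R)^d}\#\cur{X \in [-R,R)^d : \abs{T(X) - X} > L} \;\xrightarrow[R \to \infty]{\mathrm{a.s.}}\; \EE f_L.
\]

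Next I would verify $\EE f_L \downarrow 0$ as $L \to \infty$: for every realization and every matched point $X$, the image $T(X)\in\R^d$ is a single point, hence $\abs{T(X)-X}<\infty$ and $f_L \downarrow 0$ pointwise; dominated convergence with the integrable majorant $f_0 = \#\cur{X \in [0,1)^d}$ (of expectation $1$ by unit intensity of the Poisson process) closes this step. Given $\eps > 0$, I now fix a \emph{deterministic} $L = L(\eps)$ with $\EE f_L < (\eps/4)^d$. The a.s.\ convergence above then produces a random $r_* < \infty$ such that for every integer $R' \geq r_*$
\[
  \#\cur{X \in [-R',R')^d : \abs{T(X)-X} > L} \leq (\eps R'/2)^d.
\]
For arbitrary $R \geq \max(r_*,1)$ one applies this at $R' := \lceil R \rceil \leq 2R$ and uses $(-R,R)^d \subset [-R',R')^d$ to obtain $\#\cur{X \in (-R,R)^d : \abs{T(X)-X} > L} \leq (\eps R'/2)^d \leq (\eps R)^d$, which is \eqref{eq:L0}.

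The only faintly delicate step is the measurability of $f_L$; everything else is a soft marriage of Birkhoff and dominated convergence. No quantitative input is needed or available, and the randomness of $r_*$ is intrinsic for exactly this reason: the ergodic theorem supplies no rate of convergence, so one cannot hope for a deterministic threshold.
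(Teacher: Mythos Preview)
Your proof is correct and follows essentially the same approach as the paper: apply Birkhoff's ergodic theorem to the per-cell count of far-transported points, then use dominated convergence (with the Poisson count as majorant) to send the threshold $L\to\infty$. Your treatment of measurability and of the passage from integer to real $R$ is in fact somewhat more careful than the paper's, but the structure is identical.
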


\begin{proof}
Let $Q_R = (-R,R)^d$ and consider the number of points in $Q_R$ which are transported by a distance greater than $L$, namely
\[
N_{Q_R, B_L \bra{Q_R}^c}  = \# \cur{ X \in Q_R \,|\,\abs{T \bra{X} - X} > L }.
\]
We show that stationarity together with the ergodic theorem implies as $R \rightarrow \infty$ that
\begin{equation}\label{eq:ergconv}
\frac{1}{R^d}  N_{Q_R, B_L \bra{Q_R}^c}\rightarrow \mathbb{E}  N_{Q_1, B_L \bra{Q_1}^c}  \quad \mbox{a.~s.}.
\end{equation}
Then, taking  $L \rightarrow \infty$ we have
\begin{equation}\label{eq:statconv}
\mathbb{E}  N_{Q_1, B_L \bra{Q_1}^c}  \rightarrow 0.
\end{equation}
Note that \eqref{eq:ergconv} together with \eqref{eq:statconv} imply the existence of a random radius $r_*$ and a deterministic $L$ as in the Lemma, indeed for any fixed $\eps>0$ we can choose $L$ large enough so that $\mathbb{E} N_{Q_1, B_L \bra{Q_1}^c} \leq \frac{\eps^d}{2}$, and then choose $r_*$ large enough so that for $R \geq r_*$ $$\abs{\frac{1}{R^d} N_{Q_R, B_L \bra{Q_R}^c} - \mathbb{E} N_{Q_1, B_L \bra{Q_1}^c}} \leq \frac{\eps^d}{2},$$
which turns into the equivalent assertion $\frac{1}{R^d}  N_{Q_R, B_L \bra{Q_R}^c} \leq \eps^d$. 

We now turn to \eqref{eq:ergconv}. For $i \in \Z^d$ note that  
\[
\begin{split}
 N_{Q_R, B_L \bra{Q_R}^c}  &= \sum_{\abs{i}< R} N_{Q_1 + i, B_L \bra{Q_1 + i}^c} \\
& = \sum_{\abs{i}<R} \# \cur{ X \in { \bra{i_1 -1 ,i_1+1 } \times \dots \times \bra{i_d -1 , i_d+1 }}  \,|\,\abs{T \bra{X} - X} > L}.
\end{split}
\] 
Since $\mu$ is stationary and ergodic, by the Birkhoff-von Neumann ergodic theorem \cite[Theorem 9.6]{kallenberg} for any divergent sequence of integer radii
\begin{equation}\label{eq:Birkhconv1}
\frac{1}{R^d} \sum_{\abs{i} < R}  N_{Q_1 + i, B_L \bra{Q_1 + i}^c} \rightarrow \mathbb{E} N_{Q_1, B_L \bra{Q_1}^c}.
\end{equation}
Note that for integer $R$ the term on the left hand side amounts to $\frac{1}{R^d} N_{Q_R, B_L \bra{Q_R}^c}$. Since for every real $R$ there exists an integer $\bar{R}$ such that $\bar{R} \leq R \leq \bar{R}+1$ and the ratio between $\bar{R}$ and $\bar{R}+1$ goes to $1$ as $R \rightarrow \infty$, \eqref{eq:Birkhconv1} holds also for any divergent sequences of real radii.  

Finally we turn to \eqref{eq:statconv}. By dominated convergence it suffices to show that
\begin{align}\label{eq:pointwiseErgodic}
 N_{Q_1, B_L \bra{Q_1}^c} \rightarrow 0 && a.~s..
\end{align}
Indeed, since $ N_{Q_1, B_L \bra{Q_1}^c} \leq  \# \{X \in Q_1 \}$ by dominated convergence \eqref{eq:statconv} holds. Note that $ N_{Q_1, Q_L^c}$ is finite for every realization of $\cur{X}$, then there exists $L$ large enough such that $ N_{Q_1, Q_L^c} =0$, which implies the almost sure convergence \eqref{eq:pointwiseErgodic} since $\EE N_{Q_1, B_L \bra{Q_1}^c}$ is finite by the properties of the Poisson process.  
\end{proof}

\subsection{The $L^\infty$-estimate}
The proof is very similar to the $L^\infty$ estimate in \cite[Lemma 2.9]{GHO}. However, note that in  \cite[Theorem 1.4]{GHO} a local $L^2$ estimate was turned into a $L^\infty$ estimate whereas in the current setup we want to turn a $L^0$ estimate into a $L^\infty$ estimate. The key property that allows us to do this is the monotonicity of the support of $T$. This translates the partial control given by \eqref{eq:L0} into the claimed $L^\infty$ estimate.

\begin{lemma}\label{lem:Linfty}
For every $\eps >0$ there exists a random radius $r_*<\infty$ a.~s.  such that for every $R \geq r_*$
 \begin{equation}\label{eq:Linfty}
   \abs{T \bra{X} - X} \leq  \eps R \quad \mbox{provided that} \; X \in (-R,R)^d.
 \end{equation}
\end{lemma}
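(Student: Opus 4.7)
The plan is to argue by contradiction, combining monotonicity of the support of $T$ with the $L^0$ estimate of Lemma \ref{lem:L0} and Poisson concentration. Suppose there were $X_0 \in (-R,R)^d$ with $L' := |T(X_0) - X_0| > \eps R$, and set $v := (T(X_0) - X_0)/L'$. For any other point, decomposed as $X = X_0 + s v + w$ with $w \perp v$, and $T(X) = X + \xi$, cyclical monotonicity applied to the pair $\{X_0, X\}$ rearranges to
\[
|X - X_0|^2 - L' s + \xi \cdot (X - X_0) \geq 0.
\]

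The geometric core of the argument is to localize to the tube
\[
C := X_0 + \{s v + w : |s - L'/2| \leq \eta L',\ w \perp v,\ |w| \leq \eta L'\}
\]
for a small absolute constant $\eta > 0$. On $C$ one has $|X - X_0|^2 - L's = -(L')^2/4 + O(\eta)(L')^2$, which is of order $-(L')^2$ once $\eta$ is small. So if a point $X \in C \cap \{X\}$ had displacement $|\xi| \leq L$, the remaining term $|\xi \cdot (X - X_0)|$ would be at most of order $L L'$, which cannot compensate once $L' > \mathrm{const}\cdot L$. Since $L' \geq \eps R$ and $L$ is deterministic, this is automatic for $R$ large. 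Hence \emph{every} point of $\{X\}$ falling in $C$ must satisfy $|T(X) - X| > L$.

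I would then close by a counting contradiction. On one hand, applying Lemma \ref{lem:L0} on the enlarged cube $(-2R, 2R)^d \supset C$ with a small parameter $\eps'' \ll \eta\eps$ fixes a deterministic $L = L(\eps'')$ and a random radius beyond which
\[
\#\{X \in (-2R, 2R)^d : |T(X) - X| > L\} \leq (2\eps'' R)^d.
\]
On the other hand $|C| \sim \eta^d(\eps R)^d$, so Poisson concentration together with a union bound over a polynomial-in-$R$ net of admissible positions and orientations of $C$ inside $(-2R, 2R)^d$ provides a further random radius beyond which $\#(C \cap \{X\}) \geq |C|/2$ uniformly in such tubes. Choosing $\eps''$ sufficiently small relative to $\eta\eps$ makes the two bounds incompatible, giving the contradiction and hence the claimed $|T(X) - X| \leq \eps R$ for every $X \in (-R,R)^d$.

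The step I expect to be the main obstacle is the last uniform lower bound on $\#(C \cap \{X\})$: the tube $C$ is determined by the adversarial random data $(X_0, v)$, so one must extract a Poisson density estimate that holds simultaneously for every admissible $C$. This is the only place where the specific Poisson structure (through the Gaussian-type tails of its counting statistics) enters, beyond the stationarity and ergodicity already used through Lemma \ref{lem:L0}.
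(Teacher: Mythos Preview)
Your approach is genuinely different from the paper's and, modulo one gap, is sound. The paper argues constructively: given $X\in(-R,R)^d$, it surrounds $X$ by $d+1$ Poisson points $X_1,\ldots,X_{d+1}$ at distance $\sim r$ (any $r\gg\eps R$) lying along the axes of a simplex, each guaranteed by the $L^0$ estimate to have moderate displacement $|T(X_n)-X_n|\le L$; monotonicity of $T$ then bounds $(T(X)-X)\cdot\frac{X_n-X}{|X_n-X|}$ for each $n$, and since the directions span, this caps $|T(X)-X|$. Your contradiction runs monotonicity in the opposite direction: a single long displacement forces every Poisson point in the tube $C$ to also have long displacement, contradicting the $L^0$ count.

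The gap is your assertion $C\subset(-2R,2R)^d$. Your tube is centered at $X_0+(L'/2)v$ with half-width $\eta L'$, and nothing prevents $L'\gg R$, so $C$ may lie far outside $(-2R,2R)^d$; your counting comparison and your net then both break. The clean repair is to use $\eps R$ in place of $L'$ when defining $C$: center the tube at $X_0+(\eps R/2)v$ with half-widths $\eta\eps R$. Using only $L'\ge\eps R$, the same computation gives $|X-X_0|^2-L's\le|X-X_0|^2-\eps R\,s\le -c(\eps R)^2$ for small $\eta$, while $|\xi\cdot(X-X_0)|\le L\eps R$ is negligible once $R\gg L/\eps$. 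Now $C\subset(-2R,2R)^d$ is automatic, $|C|\sim(\eta\eps R)^d$, and the net over centers and directions has cardinality depending only on $\eta,\eps,d$.

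On the density step you flag: your union-bound route does go through (after the fix the Poisson tail $e^{-c(\eta\eps R)^d}$ beats any polynomial net, and Borel--Cantelli over dyadic $R$ produces $r_*$). The paper sidesteps this entirely by invoking the pathwise bound $W_{(-2R,2R)^d}^2(\mu,n)\le(\eps\,4R)^{d+2}$ from Lemma~\ref{lem:upperboundpathwise}: testing against a single Lipschitz bump yields $\#\{X\in Q\}\gtrsim r^d$ for \emph{every} cube $Q\subset(-2R,2R)^d$ of side $r\gg\eps R$ at once, with no net and no Poisson tails. That is the trade-off: the paper's route is cleaner but imports the transportation upper bound; yours is more self-contained, relying only on raw Poisson concentration, but needs the netting argument spelled out carefully.
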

\begin{proof}

\medskip

{\sc Step 1}. Definition of $r_*=r_*(\eps)$ given $0<\eps\ll 1$ 
as the maximum of three $r_*$'s.
First, by Lemma \ref{lem:L0}, there exists a (deterministic) length $L<\infty$ and 
the (random) length $r_*<\infty$ such that for $4R\ge r_*$,
the number density of the Poisson points in $(-2R,2R)^d$ 
transported further than the ``moderate distance'' $L$ is small in the sense of
\begin{align}\label{fw17}
\#\{\,X\in(-2R,2R)^d\,|\,|T(X)-X|>L\,\}\le(\eps 4R)^d.
\end{align}
Second, by Lemma \ref{lem:upperboundpathwise} we may also assume that $r_*$ is so large that for $R\ge r_*$,
the non-dimensionalized transportation distance of $\mu$ to its number density $n$
is small, and that $n\approx 1$, in the sense of
\begin{align}\label{fw11}
W^2_{(-2R,2R)^d}(\mu,n)+\frac{(4R)^{d+2}}{n}(n-1)^2\le
(\eps 4 R)^{d+2}.
\end{align}
Third, w.~l.~o.~g.~we may assume that $r_*$ is so large that 
\begin{align}\label{fw14}
L\le\eps r_*.
\end{align}
We now fix a realization and $R\ge r_*$.

\medskip

{\sc Step 2}. There are enough Poisson points on mesoscopic scales.
We claim that for any cube $Q\subset(-2R,2R)^d$ of ``mesoscopic'' side length 
\begin{align}\label{fw12}
r\gg\eps R
\end{align}
we have\footnote{The notation $A \gtrsim B$ means that there exists a constant $C>0$, which may only depend on $d$, such that $A \geq C B$. Similarly $A \lesssim B$ means that there exists a constant $C>0$, which may only depend on $d$, such that $A \leq C B$.}
\begin{align}\label{fw13}
\#\{\,X\in Q\,\}\gtrsim r^d. 
\end{align}
Indeed, it follows from the definition of $W_{(-2R,2R)}(\mu,n)$ that for any
Lipschitz function $\eta$ with support in $Q$ we have
\begin{align*}
\big|\int\eta d\mu-\int\eta ndy\big|\le({\rm Lip}\eta)\big(\int_Qd\mu+n|Q|\big)^\frac{1}{2}
W_{(-2R,2R)^d}(\mu,n).
\end{align*}
We now specify to an $\eta\le 1$ supported in $Q$, to the effect of $\int\eta d\mu$
$\le\int d\mu$ $=\#\{\,X\in Q\,\}$, so that by Young's inequality
\begin{align}\label{fw25}
\int\eta ndy&\lesssim
\#\{\,X\in Q\,\}+({\rm Lip}\eta)^2W_{(-2R,2R)^d}^2(\mu,n)\nonumber\\
&+({\rm Lip}\eta)\big(n|Q|\big)^\frac{1}{2}
W_{(-2R,2R)^d}(\mu,n).
\end{align}
At the same time, we may ensure $\int_{(-2R,2R)^d}\eta\gtrsim r^d$ and 
${\rm Lip}\eta\lesssim r^{-1}$, 
so that by (\ref{fw11}), which in particular ensures $n\approx 1$, (\ref{fw25})
turns into
\begin{align*}
r^d\lesssim 
\#\{\,X\in Q\,\}+r^{-2}(\eps R)^{d+2}+r^{\frac{d}{2}-1}(\eps R)^{\frac{d}{2}+1}.
\end{align*}
Thanks to assumption (\ref{fw12}) we obtain (\ref{fw13}).

\medskip

{\sc Step 3}. Iteration. There are enough Poisson points of moderate transport distance
on mesoscopic scales. We claim that for any cube
$Q\subset(-2R,2R)^d$ of side-length satisfying (\ref{fw12}) 
we have
\begin{align}\label{fw16}
\mbox{there exists}\;X\in Q\;\mbox{with}\;|T(X)-X|\le L.
\end{align}
We suppose that (\ref{fw16}) were violated for some cube $Q$. 
By (\ref{fw13}), there are $\gtrsim r^d$ of such points. By assumption
(\ref{fw12}), there are thus $\gg(\eps R)^d$ Poisson points in $(-2R,2R)^d$
that get transported by a distance $>L$, which contradicts (\ref{fw17}).

\medskip

{\sc Step 4}. At mesoscopic distance around a given point $X\in(-R,R)^d$, 
there are sufficiently
many Poisson points that are transported only over a moderate distance. 
More precisely, we claim that provided (\ref{fw12}) holds, 
there exist $d+1$ Poisson points $\{X_n\}_{n=1}^{d+1}$ 
that are transported over a moderate distance, i.~e.
\begin{align}\label{fw19}
|T(X_n)-X_n|\le L,
\end{align}
but on the other hand lie in ``sufficiently general'' directions around $X$, meaning that 
\begin{align}\label{fw20}
\begin{array}{l}
\mbox{the convex hull of}\;\{\frac{X_n-X}{|X_n-X|}\}_{n=1}^{d+1}\;\mbox{contains}\;B_\rho
\end{array}
\end{align}
for $\rho\ll 1$, while the distances to $X$ are of order $r$
\begin{align}\label{fw18}
|X_n-X|\sim r.
\end{align}
Indeed, this can be seen as follows: Consider the symmetric 
tetrahedron\footnote{using the $d=3$-language, for $d=2$ it is the equilateral triangle} 
with barycenter at $X$. For each of its $d+1$ vertices, consider a rotationally
symmetric cone with apex at $X$ and with axis passing through the vertex.
Provided the opening angles are $\ll 1$, by continuity,
any selection $e_n$ of unit vectors in this cones still has the property 
that their convex hull contains $B_\rho$ for $\rho\ll 1$.
Consider the intersection of these cones with
the (dyadic) annulus centered at $X$ of radii $r$ and $2r$. These $d+1$ intersections are contained
in $(-2R,2R)^d$, and each contains a cube of side-length $\sim r$. Hence (\ref{fw16})
applies and we may pick a Poisson point $X_n$ with (\ref{fw19}) in each of these intersections, see Figure \ref{fig:1}.
Condition (\ref{fw18}) is satisfied because of the annulus, and condition (\ref{fw19})
is satisfied because of the cones.

\begin{figure}[h]
\begin{tikzpicture}
  \draw (0,0) circle (1cm);
  \draw (0,0) circle (2cm);
  \coordinate (A) at (0,0);
  \coordinate (B) at (0.3,1.2);
  \coordinate (C) at (1.2,-0.8);
  \coordinate (D) at (-1.5,-0.7);
  \draw [->] (A) -- (B);
  \draw [->] (A) -- (C);
  \draw [->] (A) -- (D);
  \draw (300:2cm) -- (0,0) -- (240:2cm) arc[start angle=240, end angle=300, radius=2cm] -- cycle;
  \draw (180:2cm) -- (0,0) -- (120:2cm) arc[start angle=120, end angle=180, radius=2cm] -- cycle;
  \draw (60:2cm) -- (0,0) -- (0:2cm) arc[start angle=0, end angle=60, radius=2cm] -- cycle;
  
  \foreach \Point/\PointLabel in {(0.3,1.2)/X_1, (1.2,-0.8)/X_2, (-1.5,-0.7)/X_3}
        \draw[fill=red] \Point circle (0.02) node[above ] {$\PointLabel$};
  \draw[fill=red] (0,0) circle (0.02) node [below = 1mm of A] {$X$};
\end{tikzpicture}
\caption{Construction in $d=2$.}
\label{fig:1}
\end{figure}
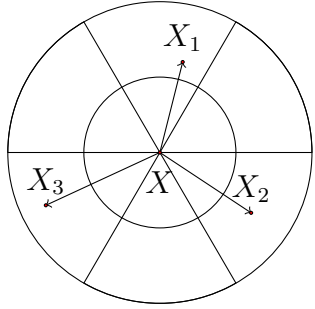

\medskip

{\sc Step 5}. All Poisson points are transported over distances $\ll R$.
We claim that for all Poisson points $X$
\begin{align}\label{fw21}
|T(X)-X|\lesssim \eps R\quad\mbox{provided}\;X\in(-R,R)^d.
\end{align}
Given the Poisson point $X\in(-R,R)^d$, let $\{X_n\}_{n=1}^{d+1}$ be as in Step 4.
By cyclical monotonicity \eqref{fw01}, which implies monotonicity of the map $T$, we have $(T(X_n)-T(X))\cdot(X_n-X)\ge 0$, which we use in form of
\begin{align}\label{fw22}
(T(X)-X)\cdot(X_n-X)&\le(T(X_n)-X_n)\cdot(X_n-X)+|X_n-X|^2\nonumber\\
&\lesssim|T(X_n)-X_n|^2+|X_n-X|^2.
\end{align}
We now appeal to (\ref{fw19}), 
which by (\ref{fw14}) and and (\ref{fw12}) implies
\begin{align*}
|T(X_n)-X_n|\le r.
\end{align*}
Inserting this and (\ref{fw18}) into (\ref{fw22}), we obtain
\begin{align*}
(T(X)-X)\cdot\frac{X_n-X}{|X_n-X|}\lesssim r
\end{align*}
for all $n=1,\cdots,d+1$. Since by (\ref{fw20}), any unit vector $e$ can be written as 
a linear combination of $\{\frac{X_n-X}{|X_n-X|}\}_{n=1}^{d+1}$ with non-negative
weights $\le\frac{1}{\rho}\sim 1$, this implies $|T(X)-X|\lesssim r$. Since (\ref{fw12}) was the
only constraint on $r$, we obtain (\ref{fw21}).
\end{proof}


\subsection{Key step: Harmonic approximation}
\begin{lemma}\label{lem:harmonicapprox}
There exist a constant $C$\footnote{here and throughout the paper $C$ denotes a positive constant, which may only depend on the dimension, the value of which may change from line to line.} and a random radius $r_* < \infty$ a.~s. such that for every $R\geq r_*$ we have
\begin{equation}\label{eq:L2statement}
\frac{1}{R^d}\sum_{X \in B_R \; \mbox{or} \; T \bra{X} \in B_R} |T\bra{X}  - X|^2  \leq  C \ln R.
\end{equation}
\end{lemma}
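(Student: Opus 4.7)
\noindent\textit{Proof sketch.} I plan to combine the two pathwise ingredients already established in this section with an iteration of the harmonic approximation result \cite[Theorem~1.4]{GHO}. The starting point is a coarse $L^{2}$ energy bound. Since cyclical monotonicity \eqref{fw01} is preserved under reversing the cycle, $T^{-1}$ is cyclically monotone, so Lemma~\ref{lem:Linfty} applied to both $T$ and $T^{-1}$ yields $|T(X)-X|\leq \eps R$ whenever $X\in B_R$ or $T(X)\in B_R$, for every $R\geq r_{\ast}(\eps)$. Combined with the a.s.~Poisson bound $\#\{X\in B_R\}+\#\{Y\in B_R\}\lesssim R^d$ at large $R$, this upgrades to $E(R)\leq C\eps^{2}R^{2}$. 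Simultaneously, Lemma~\ref{lem:upperboundpathwise} together with concentration of $n_\mu,n_\nu$ yields $D(R)\leq C\ln R$ for $R\geq r_{\ast}$.

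For $\eps$ small and $R$ large the dimensionless smallness $E(R)/R^{2}+D(R)/R^{2}\leq\delta_{0}$ required by the harmonic approximation holds at every relevant scale. Applied at scale $R$, it produces a harmonic function $\phi$ on $B_{R/2}$ with $T-\id$ close in $L^{2}$ to $\nabla\phi$. The decisive input of \cite{GHO} is that, after isolating the affine part of $\nabla\phi$ and exploiting the extra $\theta^{2}$ decay of the non-affine part of a harmonic gradient on a sub-ball, one obtains a Campanato-type one-step inequality
\[
E(\theta R)\;\leq\;\lambda\,E(R)\;+\;C\,D(R),
\]
with a universal sub-scaling $\theta\in(0,\tfrac12)$ and a universal contraction factor $\lambda<\theta^{2}$ depending only on dimension. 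Without the strict gap $\lambda<\theta^{2}$ (which a naive use of interior regularity for $\phi$ would not deliver, since the affine part of $\nabla\phi$ contributes to $E$ at every scale) the iteration below would break down.

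To conclude I chain this estimate along $R_{j}=\theta^{-j}R$ for $j=0,1,\ldots,N$, obtaining
\[
E(R)\;\leq\;\lambda^{N}E(\theta^{-N}R)\;+\;C\sum_{j=1}^{N}\lambda^{\,j-1}\,D(\theta^{-j}R).
\]
By the coarse bound of the first paragraph, the initial term equals $C\eps^{2}(\lambda/\theta^{2})^{N}R^{2}$, which vanishes as $N\to\infty$ thanks to $\lambda<\theta^{2}$. For the data sum, $D(\theta^{-j}R)\leq C(\ln R+j)$ and the geometric weights $\lambda^{j-1}$ give a convergent series of total size $\leq C\ln R$ uniformly in $N$. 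Sending $N\to\infty$ yields $E(R)\leq C\ln R$, which is the claim. The main obstacle of the plan is therefore concentrated entirely in securing the strict sub-$\theta^{2}$ contraction in the one-step estimate, i.e.~in the proper bookkeeping of the affine part of the harmonic approximant — exactly the content of \cite[Theorem~1.4]{GHO}; a secondary technical point, essentially automatic, is to verify that the smallness hypothesis is preserved along the descending iteration, which follows because $E/R^{2}$ only improves and $D/R^{2}\to 0$.
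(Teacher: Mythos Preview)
Your outer iteration (passing from scale $R$ to $6R,6^2R,\ldots$ and using the coarse bound $E(6^kR)\lesssim(6^kR)^2$ to kill the top term) is exactly what the paper does. The gap is in the one-step estimate. You assert that \cite[Theorem~1.4]{GHO} delivers
\[
E(\theta R)\;\le\;\lambda\,E(R)+C\,D(R)\quad\text{with }\lambda<\theta^{2},
\]
but the theorem, as recorded in \eqref{eq:harmapproxoutput}, only says that $T-\id$ is $L^{2}$-close to a harmonic gradient $\nabla\Phi$ up to $\tau E(6R)+C_{\tau}D(6R)$, together with $\sup_{B_{2R}}|\nabla\Phi|^{2}\le C_{\tau}\bigl(E(6R)+D(6R)\bigr)$. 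The affine part of $\nabla\Phi$ is thus controlled only by $E+D$, not by $\tau E+D$. Isolating it and invoking the $\theta^{2}$ interior decay of the non-affine harmonic remainder yields a one-step inequality for the \emph{excess} $\inf_{A}\frac{1}{R^{d}}\sum|T(X)-X-A|^{2}$, not for $E(R)$ itself; passing back from the excess to $E$ requires bounding the cumulative affine shifts, which is circular without independent input. So the ``bookkeeping of the affine part'' that you correctly flag as the obstacle is genuinely unresolved in your plan and is \emph{not} the content of \cite[Theorem~1.4]{GHO}.

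The paper obtains its one-step estimate $E(R)\le\tau E(6R)+C_{\tau}\ln R$ by a different, probabilistic mechanism that you do not use: the ergodic $L^{0}$-bound of Lemma~\ref{lem:L0}. One splits $E(R)$ according to whether $|T(X)-X|\le L_{\tau}$ or not. The moderate part contributes $\lesssim L_{\tau}^{2}\le\ln R$ by \eqref{eq:L2dataestpoint} and \eqref{eq:L2LleqlnR}. On the large part one writes $|T(X)-X|^{2}\le 2|T(X)-X-\nabla\Phi(X)|^{2}+2|\nabla\Phi(X)|^{2}$; the first piece is $\le\tau E(6R)+C_{\tau}D(6R)$ by \eqref{eq:harmapproxoutput}, while the second is summed only over the at most $\tau R^{d}$ points furnished by \eqref{eq:L2ergodic}, so that even the crude bound $\sup|\nabla\Phi|^{2}\le C_{\tau}(E+D)$ produces a net $\tau C_{\tau}(E+D)$. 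The small factor in front of $E$ therefore comes from the $L^{0}$ count, not from any harmonic decay --- this is the idea missing from your sketch.
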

\begin{proof}[Proof of Lemma \ref{lem:harmonicapprox}]
The proof relies on the harmonic approximation result from \cite[Theorem 1.4]{GHO}. This result establishes that for any $0<\tau\ll 1$, there exists
an $\eps>0$ and a $C_\tau<\infty$ such that provided for some $R$
\begin{equation}\label{eq:hypharmapprox}
\frac1{R^2} E(6R) + \frac1{R^2} D(6R)\leq \eps
\end{equation}
(recall \eqref{eq:energy} and \eqref{eq:data} for the definition) there exists a harmonic gradient field $\Phi$ such that
\begin{equation}\label{eq:harmapproxoutput}
\begin{split}
&\frac{1}{R^d} \sum_{X\in B_R\;\mbox{or}\;T(X)\in B_R}  \left|T \bra{X} - X -\nabla \Phi(X)\right|^2 \leq \tau E(6R) + C_\tau D(6R), \\
&  \sup_{B_{2R}} |\nabla \Phi|^2 \leq C_\tau \bra{E(6R) + D(6R)}.
\end{split}
\end{equation}
The fraction $\tau$ will be chosen at the end of the proof. Note that in \eqref{eq:data} $D \bra{R}$ is defined on boxes $(-R,R)^d$ while \cite[Theorem 1.4]{GHO} \eqref{eq:hypharmapprox} requires balls. Since $B_{6R} \subseteq (-6R, 6R)^d$ we may assume that \eqref{eq:hypharmapprox} holds also for $B_{6R'}$ for $R'$ close to $R$, see \cite[Lemma 2.10]{GHO} for this type of restriction property.

\medskip
{\sc Step 1}. Definition of $r_*$ depending on $\tau$. For $0< \tau \ll 1$ let $\eps = \eps \bra{\tau}$ be as above. By Lemma \ref{lem:upperboundpathwise} we may assume that $r_*$ is large enough so for any dyadic $R\geq r_*$
\begin{equation}\label{eq:L2upperdata}
D(R) \leq C \ln R
\end{equation}  
and
\begin{equation}\label{eq:L2bounddata}
D(6R) \leq \frac\eps2 R^2.
\end{equation}
Note that only the bound \eqref{eq:L2upperdata} is specific to $d=2$. From now on, we restrict ourselves to dyadic $R$, which we may do w.~l.~o.~g.~ for \eqref{eq:L2statement}. Note that by the bound on $D\bra{6R}$ in \eqref{eq:L2bounddata} and the second and fourth term in the definition of $D\bra{R}$ in \eqref{eq:data}
\begin{equation}\label{eq:L2dataestpoint}
\# \bra{\cur{X \in B_R} \cup \cur{ T(X) \in B_R}} \leq C R^d.
\end{equation}
Moreover, we may assume that $r_*$ is large enough so that \eqref{eq:Linfty} holds. Since $B_{6R}\subset (-6R,6R)^d$ we may sum \eqref{eq:Linfty} over $B_R$ to obtain for $R\geq r_*$ 
\[
\frac{1}{R^d}\sum_{X\in B_{6R}} |T \bra{X} - X|^2 \leq \frac\eps4 R^2.
\]
By symmetry, potentially enlarging $r_*$, we may also assume that \eqref{eq:Linfty} holds with $X$ replaced by $T \bra{X}$ so that both
\begin{equation}\label{eq:L2preimageingoodball}
T \bra{X} \in B_R \Rightarrow X \in B_{2R}
\end{equation}
and
\[
\frac{1}{R^d}\sum_{T\bra{X}\in B_{6R}} |T \bra{X} - X|^2 \leq \frac\eps4 R^2,
\]
thus
$$ E \bra{6R} = \frac{1}{R^d}\sum_{X\in B_{6R}\;\mbox{or}\;T(X)\in B_{6R}} |T \bra{X} - X|^2 \leq \frac\eps2 R^2,$$
and in particular \eqref{eq:hypharmapprox} holds.
Finally by Lemma \ref{lem:L0} we may assume, possibly enlarging $r_*$, that there exists a deterministic constant $L_\tau$ and for $R\geq r_*$ we both have 
\begin{equation}\label{eq:L2ergodic}
\# \bra{\cur{X \in Q_R \ | \ \abs{T \bra{X} - X} > L_\tau} \cup \cur{T(X) \in Q_R \ | \ \abs{T \bra{X} - X} > L_\tau}} \leq \tau R^d.
\end{equation}
and
\begin{equation}\label{eq:L2LleqlnR}
L_\tau^2 \leq \ln R.
\end{equation}

\medskip
{\sc Step 2.} 
Application of harmonic approximation. For all $R \geq r_*$
\begin{equation}\label{eq:L2basestep}
E \bra{R} \leq {\tau} E \bra{6 R} + C_\tau \ln R.
\end{equation}
We split the sum according to whether the transportation distance is moderate or large. On the latter we use the harmonic approximation:
\begin{align*}
 \lefteqn{\frac1{R^{d}} \sum_{\bra{X\in B_R\;\mbox{or}\;T(X)\in B_R}\:\mbox{and}\; \abs{T\bra{X}-X}>L_\tau} |T\bra{X} - X|^2}\\
 & \leq  \frac2{R^{d}} \sum_{{X\in B_R\;\mbox{or}\;T(X)\in B_R}} |T \bra{X} -X -\nabla\Phi \bra{X}|^2  \\
 & + \frac2{R^{d}} \sum_{\bra{X\in B_R\;\mbox{or}\;T(X)\in B_R}\:\mbox{and}\; \abs{T\bra{X}-X}>L_\tau} |\nabla\Phi(X)|^2 \\
& \stackrel{\eqref{eq:harmapproxoutput}, \eqref{eq:L2preimageingoodball}, \eqref{eq:L2ergodic}}{\leq}  2 \tau E(6R) + 2 C_\tau D(6R) + 2\tau C_\tau (E(6R)+D(6R))\\
 & =  2 \tau \bra{1 + C_\tau} E(6R) + 2 C_\tau \bra{1+\tau} D(6R).
\end{align*}
The last estimate combines to
\begin{align*} 
\lefteqn{\frac1{R^d}\sum_{X\in B_R\;\mbox{or}\;T(X)\in B_R}|T \bra{X} - X|^2}\\  & =  \frac1{R^d} \sum_{\bra{X\in B_R\;\mbox{or}\;T(X)\in B_R}\:\mbox{and}\; \abs{T\bra{X}-X}\leq L_\tau} |T \bra{X} - X|^2 \\
& +  \frac1{R^d}\sum_{\bra{X\in B_R\;\mbox{or}\;T(X)\in B_R}\:\mbox{and}\; \abs{T\bra{X}-X}>L_\tau} |T \bra{X}-X|^2\\
& \stackrel{\eqref{eq:L2dataestpoint}}{\leq} C L_\tau^2 +2 \tau \bra{1 + C_\tau} E(6R) + 2 C_\tau \bra{1+\tau} D(6R) \\
& \stackrel{\eqref{eq:L2upperdata},\eqref{eq:L2LleqlnR}}{\leq} 2 \tau \bra{1 + C_\tau} E \bra{6R} + \bra{2 C_\tau \bra{1 + \tau}+C} \ln R.
\end{align*}
Relabeling $\tau$ and $C_\tau$, this implies \eqref{eq:L2basestep}.

\medskip
{\sc Step 3.}
Iteration. Iterating \eqref{eq:L2basestep}, we obtain for any $k\geq 1$
\begin{align*}
 E(R) &\leq \tau E(6R) + C_\tau \ln R \\
 & \leq  \tau^2 E(6^2R) + \tau C_\tau \ln R + C_\tau \ln R\\
 &\leq \tau^k E(6^k R) + C_\tau\sum_{l=0}^{k-1}  \tau^l \ln R \\
 & \stackrel{\eqref{eq:hypharmapprox}}{\leq} \eps \bra{36 \tau}^k R^2 + C_\tau\sum_{l=0}^{k-1}  \tau^l \ln R.
\end{align*}
We now fix $\tau$ such that $36\tau < 1$ to the effect of
\[
E \bra{R} \leq C \sum_{l=0}^{\infty}  \tau^l \ln R \leq C \ln R.
\]
\end{proof}

\subsection{Trading integrability against asymptotics}
\begin{lemma}\label{lem:upperbound}
For every $\eps >0$ there exists a random radius $r_* < \infty$ a.~s. such that
$$ \frac{1}{R^d} \sum_{X\in B_R \;\mbox{or}\;T\bra{X} \in B_R}|T \bra{X} - X|  \leq  \eps  \ln^{\frac12} R.$$
\end{lemma}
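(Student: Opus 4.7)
The plan is to split the sum according to whether the transportation distance exceeds a deterministic threshold $L$, bound the ``small-distance'' part trivially, and bound the ``large-distance'' part by Cauchy--Schwarz using the $L^2$-estimate of Lemma \ref{lem:harmonicapprox} together with the $L^0$-estimate of Lemma \ref{lem:L0}.

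More precisely, given $\eps>0$, I would first fix a small $\delta=\delta(\eps)>0$, to be tuned at the end, and apply Lemma \ref{lem:L0} to produce a deterministic $L$ and a random radius beyond which $\#\{X\in(-2R,2R)^d:|T(X)-X|>L\}\leq\delta R^d$. I would then invoke Lemma \ref{lem:harmonicapprox} to obtain (on a possibly larger random scale) the bound $\sum_{X\in B_R\,\text{or}\,T(X)\in B_R}|T(X)-X|^2\leq CR^d\ln R$, and Lemma \ref{lem:Linfty} together with its symmetric counterpart for $T^{-1}$ to guarantee that $\{X\in B_R\}\cup\{T(X)\in B_R\}\subset B_{2R}$ for $R$ large. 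The pathwise Poisson bound packaged in Lemma \ref{lem:upperboundpathwise} (via the $(n-1)^2$ term in $D(R)$) then controls the cardinality of this set by $CR^d$.

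The core estimate is a two-line splitting. For the large-distance piece, Cauchy--Schwarz yields
\[
\sum_{\substack{X\in B_R\,\text{or}\,T(X)\in B_R\\|T(X)-X|>L}}|T(X)-X|\leq\bigl(\delta R^d\bigr)^{1/2}\bigl(CR^d\ln R\bigr)^{1/2}=\sqrt{C\delta}\,R^d\ln^{1/2}R,
\]
where I use that $X\in B_R$ or $T(X)\in B_R$ forces $X\in(-2R,2R)^d$ so Lemma \ref{lem:L0} is applicable. The small-distance piece is at most $L\cdot\#\{X\in B_R\,\text{or}\,T(X)\in B_R\}\leq CLR^d$. Dividing by $R^d$ gives the overall bound $\sqrt{C\delta}\,\ln^{1/2}R+CL$. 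Tuning $\delta$ so that $\sqrt{C\delta}\leq\eps/2$ absorbs the first term; the second satisfies $CL\leq(\eps/2)\ln^{1/2}R$ as soon as $R$ is so large that $\ln^{1/2}R\geq 2CL/\eps$, which is permissible because $L$ is deterministic.

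I do not foresee a genuine obstacle. The only mildly delicate point is bookkeeping: combining the three random radii from Lemmas \ref{lem:L0}, \ref{lem:Linfty}, \ref{lem:harmonicapprox}, and \ref{lem:upperboundpathwise} into a single $r_*$, and passing from the ``$X\in B_R$'' conditions appearing in those lemmas to the symmetric ``$X\in B_R$ or $T(X)\in B_R$'' condition of the target estimate, which is precisely where the $L^\infty$-bound needs to be applied in both directions so that the enlarged indexing set still lies in a cube of comparable size.
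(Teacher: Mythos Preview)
Your proposal is correct and follows essentially the same route as the paper: split the sum at a deterministic threshold $L$, bound the small-distance piece by $CL$ using the cardinality estimate, bound the large-distance piece by Cauchy--Schwarz combining the $L^0$-estimate (Lemma~\ref{lem:L0}) with the $L^2$-estimate (Lemma~\ref{lem:harmonicapprox}), and finally absorb the $CL$ term into $\eps\ln^{1/2}R$ by enlarging $r_*$. The bookkeeping you flag (merging random radii and using Lemma~\ref{lem:Linfty} symmetrically to pass from ``$T(X)\in B_R$'' to ``$X\in(-2R,2R)^d$'') is exactly what the paper does as well.
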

\begin{proof}
By Lemma \ref{lem:harmonicapprox}, we know that there exists a random radius $r_*$ such that for $R\geq r_*$ we have 
\begin{equation}\label{eq:L1L2est}
E \bra{R} = \frac{1}{R^d}\sum_{X\in B_R \;\mbox{or}\; T\bra{X} \in B_R}|T\bra{X} - X|^2  \leq C\ln R.
\end{equation}
Let $0 < \eps \ll 1$. Possibly enlarging $r_*$, we may also assume by Lemma \ref{lem:L0} that there exists a deterministic constant $L$ such that for $R \geq r_*$
\begin{equation}\label{eq:L1largetransport}
\# \bra{\cur{X \in B_R \ | \ \abs{T \bra{X} - X} > L} \cup \cur{T(X) \in B_R \ | \ \abs{T \bra{X} - X} > L}} \leq \eps R^d.
\end{equation}
Furthermore, note that by Lemma \ref{lem:upperboundpathwise} and the second and fourth term in the definition of $D \bra{R}$ in \eqref{eq:data} we may also assume possibly enlarging $r_*$ again that for $R \geq r_*$ \eqref{eq:L2dataestpoint} holds.
Finally, we may also assume possibly enlarging $r_*$ that for $R \geq r_*$
\begin{equation}\label{eq:L1Lsmall}
L \leq \eps^\frac12 \ln^\frac12 R.
\end{equation}
We split again the sum into moderate and large transportation distance and apply Cauchy-Schwarz:
\begin{align*}
\frac{1}{R^d}\sum_{X\in B_R \;\mbox{or} \; T\bra{X} \in B_R} |T\bra{X}-X| & \leq  \frac{1}{R^d}\sum_{\bra{X\in B_R\;\mbox{or}\;T(X)\in B_R}\:\mbox{and}\; \abs{T\bra{X}-X}\leq L}  |T\bra{X} - X| \\
& +\frac{1}{R^d}\sum_{\bra{X\in B_R\;\mbox{or}\;T(X)\in B_R}\:\mbox{and}\; \abs{T\bra{X}-X}>L}  |T\bra{X} - X|\\
& \stackrel{\eqref{eq:L2dataestpoint}, \eqref{eq:L1L2est}, \eqref{eq:L1largetransport}}{\leq} C L + \eps^\frac12 E(R)^\frac12 \\
&\stackrel{\eqref{eq:L1Lsmall}}{\leq} C \eps^\frac12 \ln^\frac12 R.
\end{align*}
Relabeling $\eps$ proves the claim.
\end{proof}

\subsection{Asymptotics for the bipartite matching problem in dimension $d=2$}\label{sec:bounds}
In this section we give a self-contained proof of the upper and lower bounds on the asymptotics in the matching problem in the critical dimension $d=2$. 

\medskip
The intuition why $d=2$ is critical for optimal transportation is clear:
the fluctuations of the number density of the Poisson point process of unit intensity,
i.~e.~its deviation from $1$ on some mesoscopic scale $r\gg 1$
are of size $O(\frac{1}{\sqrt{r^d}})$. To compensate these fluctuations, one has to displace
particles by a distance $O(r\times \frac{1}{\sqrt{r^d}})$, which is $O(1)$ iff $d=2$.
Hence taking care of the fluctuations on every dyadic scale $r$ requires a
displacement of $O(1)$. Naively, this suggests a transportation
cost per particle that is logarithmic in the ratio between the macroscopic scale $R$
and the microscopic scale $1$. However, by the independence properties of the Poisson point process,
the displacements on every dyadic scale are essentially independent, 
so that there are the usual cancellations when adding up the dyadic scales. 
Hence the displacement behaves like the {\it square root} of the logarithm.

\medskip
The proof of the lower bound is similar to the original proof in \cite{AKT84}. However, in the final step we use martingale arguments instead of Gaussian embeddings. The proof of the upper bound has some similarities to the proof in \cite{AmStTr16}. Again we use martingale arguments and crucially the Burkholder-Davis-Gundy inequality. It would be interesting to see whether our technique for the upper bound allows to get the precise constant as in \cite{AmStTr16}.

\begin{lemma}\label{lem:lowerboundpathwise}
Let $\mu, \nu$ denote two independent Poisson point processes in $\mathbb{R}^2$ of unit intensity. There exists a constant $C$, and a random radius $r_* < \infty$ a.~s. such that for any dyadic radii $R \geq r_*$
\[
\sup\cur{\int\zeta \bra{d\mu-d\nu}
\mbox{}\quad\big|\;{\rm supp}\zeta\subset(0,R)^2,\;|\nabla\zeta|\le 1,\;
\int\zeta dx=0} \geq C R^2 \ln^\frac12 R.
\]
\end{lemma}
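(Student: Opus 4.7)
Following Ajtai--Koml\'os--Tusn\'ady (AKT), I would build the dual test function $\zeta$ from a dyadic multi-resolution of $(0,R)^2$ and estimate the pairing $\int\zeta\,d(\mu-\nu)$ from below by a martingale concentration argument, replacing AKT's Gaussian embedding as announced in the paper.

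Set $K:=\lfloor\log_2 R\rfloor$ and for $k=1,\dots,K$ let $\mathcal{Q}_k$ be the dyadic partition of $(0,R)^2$ into $4^{K-k}$ squares of side $r_k=2^k$. On each $Q\in\mathcal{Q}_k$ introduce a smooth Haar-shaped bump $\phi_Q$ supported in $Q$, $1$-Lipschitz, with $\|\phi_Q\|_\infty\asymp r_k$ and $\int\phi_Q\,dx=0$, chosen so that $\int\phi_Q\,d(\mu-\nu)\asymp r_k^2\,c_Q$ for the $L^2$-normalised Haar coefficient $c_Q:=\langle h_Q,\mu-\nu\rangle$. Introduce the filtration $\mathcal{F}_k:=\sigma\{(\mu(Q'),\nu(Q')):Q'\in\mathcal{Q}_j,j\ge k\}$: conditional on $\mathcal{F}_{k+1}$, the refinement of each scale-$(k{+}1)$ cell into its four scale-$k$ children is multinomial, so $\{c_Q\}_{Q\in\mathcal{Q}_k}$ has conditional mean zero and conditional variance $\asymp 1$, and the coefficients for different $Q$ at the same scale are conditionally independent. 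With adaptive signs $\epsilon_Q:=\operatorname{sgn}(c_Q)$, set
\[
\zeta\;:=\;\sum_{k=1}^{K}\alpha_k\sum_{Q\in\mathcal{Q}_k}\epsilon_Q\,\phi_Q,
\]
with weights $\alpha_k>0$ to be specified. Within each scale the disjoint supports give an $O(1)$-Lipschitz scale-$k$ contribution, and Hoeffding applied to the $4^{K-k}$ independent terms yields $\sum_{Q\in\mathcal{Q}_k}\epsilon_Q\int\phi_Q\,d(\mu-\nu)\asymp r_k^2\sum_Q|c_Q|\asymp R^2$ at \emph{every} scale $k$ with probability $1-O(R^{-10})$.

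The crux, and the main obstacle, is the Lipschitz estimate. A plain triangle inequality gives only $\Lip(\zeta)\le\sum_k\alpha_k$ and hence $\int\zeta\,d(\mu-\nu)\asymp(\sum_k\alpha_k)R^2$, which saturates at $R^2$---missing the $\sqrt{\ln R}$. The sharp bound must come from cancellation among the scale-wise gradient contributions at each point: at $x\in(0,R)^2$, $\nabla\zeta(x)=\sum_k\alpha_k\epsilon_{Q_k(x)}\nabla\phi_{Q_k(x)}(x)$ is a weighted sum of $K$ vectors of norm $O(1)$ whose signs $\epsilon_{Q_k(x)}$ form, by the martingale structure above, a martingale difference sequence in the scale filtration. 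Burkholder--Davis--Gundy then yields $|\nabla\zeta(x)|\lesssim(\sum_k\alpha_k^2)^{1/2}$ with sub-Gaussian tail at each $x$; a chaining/union-bound argument over a fine grid in $(0,R)^2$ combined with the quantitative smoothness of $\nabla\zeta$ between grid points upgrades this to the uniform bound $\|\nabla\zeta\|_\infty\lesssim(\sum_k\alpha_k^2)^{1/2}$ a.s.\ for $R$ large.

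Choosing $\alpha_k\equiv 1/\sqrt{K}$ produces $\Lip(\zeta)\lesssim 1$ and $\int\zeta\,d(\mu-\nu)\asymp\sqrt{K}\cdot R^2\asymp R^2\sqrt{\ln R}$; after dividing $\zeta$ by the absolute constant bounding $\|\nabla\zeta\|_\infty$, the construction yields the required $1$-Lipschitz test function with support in $(0,R)^2$ and zero mean (the latter automatic from $\int\phi_Q\,dx=0$). A standard Borel--Cantelli argument along dyadic radii $R=2^m$ then converts the high-probability estimates of both steps into an almost-sure statement with some random $r_*<\infty$, as required.
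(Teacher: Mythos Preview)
Your approach shares the paper's dyadic-martingale spirit but has a genuine gap at the crux, namely the uniform Lipschitz estimate. The pointwise Azuma/BDG bound indeed gives $|\nabla\zeta(x)|$ sub-Gaussian with variance proxy $\sum_k\alpha_k^2=1$, but upgrading this to a uniform bound costs a logarithmic factor that you cannot afford. Your second-derivative control $|\nabla^2\zeta|\lesssim\sum_k\alpha_k r_k^{-1}\lesssim K^{-1/2}$ only allows a grid of mesh $\sim K^{1/2}$, still $\sim R^2/K$ points, so the union bound gives $\|\nabla\zeta\|_\infty\lesssim\sqrt{\ln(R^2/K)}\sim\sqrt{\ln R}$, \emph{exactly} cancelling the $\sqrt{\ln R}$ you gained. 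Chaining does no better: the sub-Gaussian increment metric is $d(x,y)\asymp\sqrt{j/K}$ for $|x-y|\sim 2^j$, and the Dudley integral evaluates to $\asymp\sqrt K$. In fact the process $x\mapsto\sum_k\epsilon_{Q_k(x)}$ is a branching random walk of depth $K$ on the $4$-ary dyadic tree, whose maximum over leaves is of order $K$; hence $\|\nabla\zeta\|_\infty\asymp\sqrt K$ is the \emph{true} size for your construction, and after renormalising to a $1$-Lipschitz function you recover only $\int\zeta\,d(\mu-\nu)\asymp R^2$.

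The paper handles this obstruction differently. In Lemma~\ref{lem:dualLP} it first works in expectation and uses a \emph{stopping scale}: one sets $\zeta_r=\sum_{\text{level}(Q)\ge r}N_Q\zeta_Q$ and stops the dyadic refinement at each point as soon as either the running gradient $\int_Q|\nabla\zeta_r|^2$ or the local coefficient $|N_Q|^2$ first exceeds $M|Q|\ln R$. This makes the bound $|\nabla\zeta|\lesssim\sqrt{M\ln R}$ hold \emph{deterministically}; Burkholder's $L^2$ maximal inequality then shows the exceptional set where stopping occurs early has small expected measure, so $\mathbb{E}\int\zeta\,d\mu$ remains $\gtrsim R^2\ln R$, i.e.\ $\gtrsim R^2\sqrt{\ln R}$ after normalisation. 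The passage from expectation to an almost-sure statement is done not by controlling $\nabla\zeta(x)$ at every $x$, but by applying Poisson concentration (Wu's inequality) directly to the single scalar functional $S_R(\mu,\nu)=\sup_\zeta\int\zeta\,d(\mu-\nu)$, whose add-one-point sensitivity is at most $R$.
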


\begin{lemma}\label{lem:upperboundpathwise}
Let $\mu, \nu$ be as in Lemma \ref{lem:lowerboundpathwise}. Then there exists a constant $C$, and there exists a random radius $r_*<\infty$ a.~s. such that for any dyadic radii $R \geq r_*$
\[
D(R) \leq C \ln^\frac12 R.
\]
\end{lemma}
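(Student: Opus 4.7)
\emph{Decomposition.} The quantity $D(R)$ splits as $D^\mu(R) + D^\nu(R)$; by symmetry I only treat the $\mu$-part, which consists of the density term $\frac{R^2}{n_\mu}(n_\mu-1)^2$ and the Wasserstein term $\frac{1}{R^2}W_2^2(\mu \mres Q_R, n_\mu\leb \mres Q_R)$, where $Q_R := (-R,R)^2$. For the density term, $N_R := \#\{X \in Q_R\}$ is a Poisson random variable with mean $(2R)^2$, so a standard Chernoff estimate gives $\PP(|N_R - (2R)^2| \geq K R\sqrt{\ln R}) \leq R^{-p}$ for any prescribed $p$, once $K = K(p)$ is large. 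Summing over dyadic $R = 2^j$ and invoking Borel-Cantelli produces an $r_* < \infty$ a.s.~such that for all dyadic $R \geq r_*$, $|n_\mu - 1| \lesssim R^{-1}\sqrt{\ln R}$, whence $\frac{R^2}{n_\mu}(n_\mu - 1)^2 \lesssim \ln R$.

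\emph{Dyadic transport plan.} For the Wasserstein piece I construct an explicit coupling via a dyadic hierarchy. Partition $Q_R$ into nested dyadic squares of sides $r_\ell = R/2^\ell$ for $\ell = 0,\ldots,L$ with $2^L \sim R$, and build a transport top-down: at each level $\ell$, within each parent square $Q \in \mathcal{Q}_\ell$ with children $Q_1,\ldots,Q_4$, redistribute mass among the children to match the actual Poisson counts $N_{Q_i}$, starting from the parent-uniform distribution $N_Q/4$ per child. Each unit of redistribution moves a distance $O(r_\ell)$, so a triangle/telescoping argument yields
\[
W_2^2(\mu\mres Q_R, n_\mu \leb \mres Q_R) \les \sum_{\ell=0}^{L-1} r_\ell^2 \sum_{Q \in \mathcal{Q}_\ell} \sum_{i=1}^4 \frac{(N_{Q_i} - N_Q/4)^2}{N_Q} + \text{(unit leaves)},
\]
where the unit-scale leaf term, a sum of single-square matching costs, is handled by an elementary one-scale estimate.

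\emph{Martingale concentration.} The probabilistic core is that, conditionally on the parent counts $\{N_Q\}_{Q \in \mathcal{Q}_\ell}$, the child-count vectors $(N_{Q_1},\ldots,N_{Q_4})$ are independent multinomials by Poisson thinning; in particular each $N_{Q_i} - N_Q/4$ is centered with conditional variance at most $N_Q/4$. Taking the expectation term-by-term yields $\mathbb{E}[(N_{Q_i}-N_Q/4)^2/N_Q] \les 1$, so the \emph{expected} total cost is $\lesssim \sum_\ell r_\ell^2 |\mathcal{Q}_\ell| \sim R^2 \ln R$, matching the AKT asymptotic. To upgrade to an almost-sure statement, organise the centered level-$\ell$ contributions into a martingale $(M_\ell)$ with respect to the filtration $\mathcal{F}_\ell$ generated by the counts up to level $\ell$. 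The predictable quadratic variation is controlled by $\langle M \rangle_L \les R^4 \ln R$, and the Burkholder-Davis-Gundy inequality gives $\mathbb{E}|M_L|^p \leq C_p \mathbb{E}\langle M\rangle_L^{p/2}$ for every $p \geq 2$. Choosing $p$ large and applying Markov yields $\PP(W_2^2 > K R^2 \ln R) \leq R^{-q}$ for any $q$; Borel-Cantelli over dyadic $R$ then promotes this to the desired pathwise bound.

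\emph{Main obstacle.} The delicate point is designing the telescoping transport plan so that its cost decomposes cleanly into multinomial martingale increments across scales, and carefully handling (i) the unit-scale leaf contribution, where the Poisson fluctuations in each unit square are of the same order as the mean, and (ii) the random denominator $1/N_Q$, which requires a uniform lower bound on parent counts (itself a one-scale Chernoff event, analogous to Step 2 of Lemma \ref{lem:Linfty}). Once this dyadic/martingale framework is in place, BDG and Borel-Cantelli deliver both the expectation bound and the almost-sure conversion in a routine manner.
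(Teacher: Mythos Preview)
Your treatment of the density term via Chernoff and Borel--Cantelli matches the paper's Step~2, and your overall architecture (dyadic hierarchy, multinomial increments, martingale structure) is close in spirit to the paper's argument. However, there is a genuine gap in the Wasserstein step.

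The displayed decomposition
\[
W_2^2(\mu\mres Q_R, n_\mu\leb\mres Q_R)\;\lesssim\;\sum_{\ell=0}^{L-1} r_\ell^2 \sum_{Q\in\mathcal{Q}_\ell}\sum_{i=1}^4 \frac{(N_{Q_i}-N_Q/4)^2}{N_Q}\;+\;(\text{leaves})
\]
does \emph{not} follow from a ``triangle/telescoping'' argument. The right-hand side (modulo the density lower bound) is the correct estimate for the \emph{per-level} cost $W_2^2(\rho_\ell,\rho_{\ell+1})$, but the $W_2$ triangle inequality along the chain $\rho_0\to\rho_1\to\cdots\to\rho_L$ only gives $W_2\le\sum_\ell W_2(\rho_\ell,\rho_{\ell+1})$; squaring via Cauchy--Schwarz costs an extra factor $L\sim\ln R$, so your route yields $R^2(\ln R)^2$, not $R^2\ln R$. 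The additivity across scales that you are asserting is simply false at the deterministic level.

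The paper circumvents this by working in the Eulerian (Benamou--Brenier) picture: after a stopping construction ensuring the densities stay in $[\tfrac12,2]$, it builds a \emph{single} flux $j=-\sum_Q\nabla\phi_Q$ from Neumann solutions of $-\Delta\phi_Q=n_Q-n_{Q'}$ on each dyadic $Q$, so that $W_2^2\le 2\int|j|^2$ holds pointwise. The quantity to control is then $\int|\sum_Q\nabla\phi_Q|^2$, and here the key observation is that for fixed $x$ the increments $\nabla\phi_Q(x)$ form a martingale in the dyadic scale; Burkholder's inequality converts $\mathbb{E}\,|\sum|^2$ into $\sum\mathbb{E}\,|\cdot|^2$, which evaluates to $R^2\ln R$. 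In other words, the martingale/BDG ingredient is already indispensable for the \emph{expectation} bound, not merely for the concentration step as you present it. If you wish to stay Lagrangian, you would need to argue that the per-level displacements of each mass element form a martingale so that the cross terms in $|\sum_\ell d_\ell|^2$ vanish in expectation---this is the substance of the proof, not an afterthought.

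A secondary point: for the pathwise upgrade via high-moment BDG you would need control of $\mathbb{E}\langle M\rangle_L^{p/2}$, not just $\mathbb{E}\langle M\rangle_L$, and you have not indicated how to bound higher moments of the quadratic variation. The paper instead proves the expectation bound first (its Lemma~\ref{Lem2}) and then invokes a separate Poisson concentration inequality from \cite{GHO1} together with Borel--Cantelli over dyadic~$R$.
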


\medskip
For the lower as well as the upper bound, our proof proceeds in two stages. First we show the desired estimate in expectation. Then we use concentration arguments to lift it to a pathwise estimate.

\subsubsection{Lower bound on $W_1$}
%

\begin{lemma}\label{lem:dualLP}
 Let $\mu$ denote the Poisson point process in $\mathbb{R}^2$ of unit intensity.
Then it holds for $R\gg 1$
\begin{align}\label{eq:dualPL}
\mathbb{E}{\sup\cur{\int\zeta d\mu
\mbox{}\quad\big|\;{\rm supp}\zeta\subset(0,R)^2,\;|\nabla\zeta|\le 1,\;
\int\zeta dx=0}}\gtrsim R^2\ln^\frac{1}{2}R.
\end{align}
\end{lemma}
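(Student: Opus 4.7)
My plan is to construct, for each realisation of $\mu$, an explicit admissible test function $\zeta^\ast$ built from Haar-type bumps at all dyadic scales, and then to combine a lower bound on $\EE\int\zeta^\ast\,d\mu$ with an upper bound on $\EE\|\nabla\zeta^\ast\|_\infty$. The overall shape of the argument is the standard AKT-style dyadic construction; what I would change relative to \cite{AKT84} is that the lower bound on $\EE|h_Q|$ for the individual Haar coefficients and --- more importantly --- the upper bound on $\|\nabla\zeta^\ast\|_\infty$ will both rest on the natural martingale structure coming from the multinomial refinement of Poisson, rather than on a Gaussian embedding.

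First I would partition $(0,R)^2$ into dyadic squares $\mathcal{Q}_k$ of side $r_k := 2^{-k}R$ for $k=0,\ldots,K$ with $K\sim\ln R$, and on each $Q\in\mathcal{Q}_k$ fix a Haar-like bump $\psi_Q(x) := r_k\,h\bra{(x-x_Q)/r_k}$, where $h$ is a smooth template on the unit square with $\int h=0$, $\|h\|_\infty\le 1$, $|\nabla h|\le 1$, chosen antisymmetric under some permutation of the four sub-squares (for instance a checkerboard sign pattern swapping the two diagonal pairs). The Haar coefficient $h_Q := \int\psi_Q\,d\mu = \int\psi_Q\,d(\mu-\leb)$ then has mean $0$ and variance $\int\psi_Q^2 \sim r_k^4$; a Paley-Zygmund estimate based on the bounded fourth moment of Poisson integrals with bounded kernel yields $\EE|h_Q|\gtrsim r_k^2$. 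Setting
\[
\zeta^\ast := \sum_{k=0}^K\sum_{Q\in\mathcal{Q}_k}\sgn(h_Q)\,\psi_Q,
\]
we have $\int\zeta^\ast\,d\mu = \sum_{k,Q}|h_Q|$, and summing over the $(R/r_k)^2$ cubes at scale $k$ and over the $K+1$ scales gives $\EE\int\zeta^\ast\,d\mu \gtrsim (K+1)\,R^2 \sim R^2\ln R$.

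Next I would bound the Lipschitz constant. At each $x\in(0,R)^2$,
\[
\nabla\zeta^\ast(x) = \sum_{k=0}^{K}\sgn\bra{h_{Q^{(k)}(x)}}\,\nabla\psi_{Q^{(k)}(x)}(x),
\]
a sum of $K+1$ vectors of magnitude $\lesssim 1$. With the filtration $\mathcal{F}_k := \sigma(\mu(Q)\colon Q\in\mathcal{Q}_k)$, the conditional distribution of the four child counts of any $Q\in\mathcal{Q}_k$ given $\mathcal{F}_k$ is Multinomial$(\mu(Q);\tfrac14,\tfrac14,\tfrac14,\tfrac14)$, which is invariant under the distinguished permutation of the children; by our antisymmetric choice of template, this makes $h_Q$ symmetric about $0$ conditionally on $\mathcal{F}_k$, so $(\sgn(h_{Q^{(k)}(x)}))_k$ is a $\pm 1$-valued martingale difference sequence with respect to $(\mathcal{F}_{k+1})_k$. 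Applying the Burkholder-Davis-Gundy inequality together with a chaining argument in $x$ --- taking advantage of the regularity of $x\mapsto\nabla\zeta^\ast(x)$ at the finest scale $r_K\sim 1$ --- should yield
\[
\EE\|\nabla\zeta^\ast\|_\infty \lesssim \sqrt{\ln R}.
\]

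Finally, on the event $G := \{\|\nabla\zeta^\ast\|_\infty\le C\sqrt{\ln R}\}$ with $C$ chosen large enough that $\PP(G)\ge 1/2$, the rescaled function $\zeta := \zeta^\ast/(C\sqrt{\ln R})$ is admissible in \eqref{eq:dualPL}, and a second-moment (Paley-Zygmund) estimate on $\sum_{k,Q}|h_Q|$ shows $\EE[\int\zeta^\ast\,d\mu\,;\,G]\gtrsim R^2\ln R$, from which the claim follows. The hardest step will be the chaining argument for the Lipschitz bound: a pointwise sub-Gaussian tail combined with a crude union bound over an $O(R^2)$-net loses an extra $\sqrt{\ln R}$ factor and would only yield $\|\nabla\zeta^\ast\|_\infty\lesssim \ln R$; exploiting the spatial regularity of $\nabla\zeta^\ast$ at the finer dyadic scales --- equivalently, iterating BDG scale by scale --- is what is needed to save that factor and obtain the sharp $\sqrt{\ln R}$.
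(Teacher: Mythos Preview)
Your overall architecture matches the paper's: build a dyadic test function with $\mathbb{E}\int\zeta\,d\mu\gtrsim R^2\ln R$ and Lipschitz constant $\lesssim\sqrt{\ln R}$, then divide. But the martingale step, on which your Lipschitz bound rests, does not hold as stated. You claim that $(\sgn h_{Q^{(k)}(x)})_k$ is a martingale difference sequence with respect to the count filtration $(\mathcal{F}_{k+1})_k$. The conditional-mean-zero half is correct---given $\mathcal{F}_k$ the points in $Q$ are conditionally i.i.d.\ uniform, and the permutation symmetry does force $h_Q\stackrel{d}{=}-h_Q$. Adaptedness, however, fails: with a \emph{smooth} template, $h_Q=\int\psi_Q\,d\mu$ depends on the full point configuration in $Q$, not merely on the four child counts, so $\sgn h_Q$ is not $\mathcal{F}_{k+1}$-measurable. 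If instead you enlarge the filtration to include the earlier signs so that adaptedness holds, the conditional symmetry is lost: the permutation of sub-squares that flips $h_Q$ also perturbs every ancestor coefficient $h_{Q'}$ (since $\psi_{Q'}$ is not constant on $Q$), so there is no measure-preserving involution that flips $h_Q$ while fixing the conditioning $\sigma$-algebra. Without a genuine martingale structure neither BDG nor Azuma is available. And even granting the structure, the chaining you propose does not obviously recover the lost $\sqrt{\ln R}$: the sub-Gaussian increment at spatial scale $r_m$ has variance proxy $\sim K-m$, and the Dudley integral over the resulting ultrametric is again of order $K\sim\ln R$.

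The paper resolves both issues differently. First, it takes as coefficients the count differences $N_Q=\mu(Q_r)-\mu(Q_l)$, which \emph{are} measurable with respect to the coarse count data; then $\nabla\zeta_r(x)=\sum_{Q\ni x,\,r_Q\ge r}N_Q\nabla\zeta_Q(x)$ is an honest martingale in $r$, and BDG yields the integrated maximal bound $\mathbb{E}\int_{(0,R)^2}\sup_r|\nabla\zeta_r|^2\lesssim R^2\ln R$. Second---and this is the missing idea---the paper does not attempt to pass from this $L^2_x$ control to an $L^\infty_x$ bound. Instead it introduces a spatially varying \emph{stopping scale} $r_*(x)$, halting the dyadic refinement wherever $\int_Q|\nabla\zeta_r|^2$ or $|N_Q|^2$ first exceeds $M|Q|\ln R$. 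This makes the Lipschitz bound $|\nabla\zeta|^2\lesssim M\ln R$ hold \emph{deterministically} by construction; the price is an exceptional set whose expected measure is controlled via the $L^2$ maximal inequality, and a truncation error $\mathbb{E}\big|\int(\zeta_1-\zeta)\,d\mu\big|$ controlled by H\"older together with fourth-moment bounds on $N_Q$. Replacing your sign coefficients by count-adapted ones and your chaining by this stopping construction is what makes the argument close.
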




\begin{proof}
 W.~l.~o.~g.~we may assume that $R\in 2^\mathbb{N}$, and will consider the (finite) family of all
dyadic cubes $Q\subset (0,R)^2$ of side-length $\ge 1$. 
For such $Q$, we call $Q_r$ and $Q_l$ the right and left half of $Q$ and consider the integer-valued random variable
\begin{align}\label{ao14}
N_Q=\mu(Q_r)-\mu(Q_l).
\end{align}
Fixing a smooth mask (or reference function) $\hat\zeta$ with
\begin{align}\label{ao06}
{\rm supp}\hat\zeta\in(0,1)^2,\quad\int\hat\zeta\,d\hat x=0,\quad
\int_{\hat x_1>\frac{1}{2}}\hat\zeta d\hat x
-\int_{\hat x_1<\frac{1}{2}}\hat\zeta d\hat x=1,
\end{align}
for every $Q$ we define $\zeta_Q$ via the simple transformation
\begin{align}\label{ao04}
\zeta_Q(A_Q\hat x)=\hat\zeta(\hat x)\;\mbox{where}\;A_Q\;\mbox{is the affine map with}\;Q=A_Q(0,1)^2.
\end{align}
For later purpose we note that by standard properties of the Poisson process (see \cite[equation (4.23)]{lastpenrose}) we have
\begin{align}\label{ao02}
\mathbb{E}N_Q^2=|Q|,\quad\mathbb{E}N_QN_{Q'}=0\;\mbox{for}\;Q\not=Q',\quad
\mathbb{E}N_Q\int\zeta_Qd\mu=|Q|.
\end{align}
%
%

For every dyadic $r=R,\frac{R}{2},\frac{R}{4},\cdots,1$ we consider the function
\begin{align}\label{ao05}
\zeta_r:=\sum_{Q\;\mbox{of level}\;\ge r}N_Q\zeta_Q
\end{align}
and note that for a fixed point $x\in(0,R)^d$, we have
\begin{align}\label{ao25}
\nabla\zeta_r(x)=\sum_{R\ge \rho\ge r}\sum_{Q\;\mbox{of level}\;\rho}N_Q\nabla\zeta_Q(x),
\end{align}
observing that the sum over $Q$ restricts to the one cube
of dyadic level/side-length $\rho$ that contains $x$. 
We now argue that (\ref{ao25}) is a martingale, where $r=R,\frac{R}{2},\frac{R}{4},\cdots$,
plays the role of a discrete time. More precisely, it is a martingale
with respect to the filtration generated by $\{\{N_Q\}_{Q\;\mbox{of level}\;r}\}_r$.
%
%
It suffices to show that
even when conditioned on $\{N_{\bar Q}\}_{\bar Q\;\mbox{of level}\;\rho}$
for $\rho\ge 2r$, the expectation of $N_Q$ for every $Q$ of level $r$ vanishes.
This can be seen by appealing to a Lebesgue-measure preserving
transformation of $\mathbb{R}^2$ that swaps the left and right half of $Q$;
when applied to point configurations, it preserves the law of the Poisson point process,
it leaves $N_{\bar Q}$ for $\bar Q$ of level $\rho\ge 2r$ invariant, and it
converts $N_Q$ into $-N_Q$.

\medskip

As the sum over $Q$ in (\ref{ao25}) reduces to a single summand, this
martingale has (total) quadratic variation\footnote{here we consider the total quadratic variation and not the more involved quadratic variation since it is sufficient for our purposes.} 
\begin{align*}
\sum_{R\ge r\ge 1} \sum_{Q\;\mbox{of level}\;r}  N_Q^2|\nabla\zeta_Q(x)|^2,
\end{align*}
and we claim that it satisfies
\begin{align}\label{ao03}
\mathbb{E}\int\sum_{R\ge r\ge 1}\sum_{Q\;\mbox{of level}\;r}N_Q^2|\nabla\zeta_Q(x)|^2dx
\lesssim R^2\ln R.
\end{align}
Indeed, by (\ref{ao02}) we obtain that the l.~h.~s.~of (\ref{ao03})
is given by $\sum_{Q}$ $\int|\nabla\zeta_Q(x)|^2dx$.
By (\ref{ao04}), this in turn is estimated by $\sum_{Q}$ $|Q|$
$=\sum_{R\ge r\ge 1}R^2$ $=R^2\log_2R$.
We also claim that the last item in (\ref{ao02}) implies
\begin{align}\label{ao13}
\mathbb{E}\int\zeta_1d\mu\gtrsim R^2\ln R.
\end{align}
Indeed, the l.~h.~s.~of (\ref{ao13}) is again given by $\sum_{Q}$ $|Q|$.
We now appeal to the (quadratic) Burkholder inequality \cite[Theorem 6.3.6]{Stroock} (exchanging $\mathbb{E}$ and
$\int dx$) to obtain from (\ref{ao03})
\begin{align}\label{ao11}
\mathbb{E}\int\sup_{R\ge r\ge 1}|\nabla\zeta_r(x)|^2dx\lesssim R^2\ln R.
\end{align}
%
%
%

\medskip

We now are in the position to define $\zeta$ via a stopping ``time''.
Given an $M<\infty$, which we think of as being large and that will be chosen later,
we keep subdividing the dyadic cubes (recall that we restrict to those of side length $\ge 1$)
as long as
\begin{align}\label{ao07}
\int_Q|\nabla\sum_{\bar Q\supset Q}N_{\bar Q}\zeta_{\bar Q}|^2\le M |Q|\ln R\quad
\mbox{and}\quad|N_Q|^2\le M |Q|  \ln R  .
\end{align}
This defines a nested sub-family of dyadic cubes, and thereby a random 
(spatially piecewise constant) stopping scale $r_*=r_*(x)$ (note that $\frac{r_*}{2}$ is a stopping time but we will not need that in this proof). We then set
\begin{align}\label{ao09}
\zeta(x):=\zeta_{r_*(x)}(x).
\end{align}
We first argue that we have the Lipschitz condition
\begin{align}\label{ao08}
|\nabla\zeta|^2\lesssim M\ln R.
\end{align}
Indeed, consider one of the finest cubes $Q$ in the family constructed in (\ref{ao07});
by definition (\ref{ao09}), the first item in (\ref{ao07}) amounts to
\begin{align*}
\int_Q|\nabla\zeta|^2\le M|Q|\ln R,
\end{align*}
so that (\ref{ao08}) follows once we show for arbitrary point $x$
\begin{align}\label{ao10}
r_*(x)|\nabla^2\zeta(x)|\lesssim\sqrt{M \ln R}.
\end{align}
Indeed, by definitions (\ref{ao04}), (\ref{ao05}) and (\ref{ao09}),
the l.~h.~s. $|\nabla^2\zeta(x)|$ of (\ref{ao10}) is estimated by $\sum_{r_*(x)\le r\le R}$
$r^{-2}$ $\sum_{Q\ni x\;\mbox{level}\;r} \abs{N_Q}$. By construction of $r_*(x)$
in form of the second item in (\ref{ao07}) this is estimated by
$\sum_{r_*(x)\le r\le R}$ $r^{-1} \sqrt{M  \ln R}$. This is a geometric series that
is estimated by $\lesssim r_*^{-1}(x) \sqrt{M   \ln R}$, as desired.

\medskip

We now argue that the ``exceptional'' set
\begin{align*}
E:=\{\,x\in(0,R)^2\,|\,r_*(x)>1\,\},
\end{align*}
where the dyadic decomposition stops before reaching the minimal scale $r=1$,
has small volume fraction in expectation:
\begin{align}\label{ao12}
M\mathbb{E}|E|\lesssim R^2.
\end{align}
%
%
%
Indeed, by definitions (\ref{ao05}) and (\ref{ao07}), 
$E$ is the disjoint union of dyadic cubes $Q$ that have at least one (out of four) children 
$Q'$ of level $r=\frac{r_*(x)}{2}$ with
\begin{align*}
\int_{Q'}|\nabla\zeta_r|^2>M|Q'|\ln R\quad\mbox{or}\quad|N_{Q'}|^2>M|Q'| \ln R,
\end{align*}
which combines to 
\begin{align*}
|N_{Q'}|^2+\int_{Q'}\sup_{1\le r\le R}|\nabla\zeta_r|^2>M\frac{|Q|}{4}\ln R.
\end{align*}
Summing over all $Q$ covering $E$, this yields
\begin{align*}
\sum_{\mbox{all}\;Q}|N_{Q}|^2+\int\sup_{1\le r\le R}|\nabla\zeta_r|^2>M\frac{|E|}{4}\ln R.
\end{align*}
Taking the expectation we obtain from (\ref{ao02}) and (\ref{ao11}) 
\begin{align*}
R^2\ln R\gtrsim M\mathbb{E}|E|\ln R,
\end{align*}
which yields (\ref{ao12}). 

\medskip

We finally argue that
\begin{align}\label{ao15}
M^\frac{1}{4}\mathbb{E}|\int(\zeta_1-\zeta)d\mu|\lesssim R^2\ln R.
\end{align}
Together with (\ref{ao13}) this implies 
$\mathbb{E}\int\zeta d\mu\gtrsim R^2\ln R$ for $M\gg 1$, which we fix now. 
In combination with (\ref{ao08}) this implies the claim of the lemma.
The dyadic decomposition defines a family of exceptional cubes $Q$ (a cube
$Q$ is exceptional iff for any $x\in Q$ we have that its level $r$ satisfies $r<r_*(x)$).
In view of (\ref{ao05}) and (\ref{ao09}), we thus have to estimate
$\sum_{Q} I(Q\;\mbox{exceptional}) N_Q\int \zeta_Q d\mu$.
Applying $\mathbb{E}|\cdot|$ and using H\"older's inequality in probability
we obtain
\begin{align*}
\mathbb{E}|\int(\zeta_1-\zeta)d\mu|\le
\sum_Q (\mathbb{P}Q\;\mbox{exceptional})^\frac{1}{4}
(\mathbb{E}N_Q^4)^\frac{1}{4} (\mathbb{E}(\int\zeta_Q d\mu)^2)^\frac{1}{2}.
\end{align*}
As for (\ref{ao02}), we have for the last factor 
$\mathbb{E}(\int\zeta_Q d\mu)^2$ $=\int\zeta_Q^2$ $\lesssim |Q|$.
For the middle factor, we recall that the two numbers in (\ref{ao14}) are
Poisson distributed with mean $\frac{1}{2}|Q|$, so that
$\mathbb{E}N_Q^4$ $\lesssim|Q|^2$ by elementary properties of the Poisson distribution. 
Hence we gather
\begin{align*}
\mathbb{E}|\int(\zeta_1-\zeta)d\mu|
&\lesssim\sum_Q (\mathbb{P}Q\;\mbox{exceptional})^\frac{1}{4}|Q|\nonumber\\
&\le\big(\sum_Q (\mathbb{P}Q\;\mbox{exceptional})|Q|\big)^\frac{1}{4}
\big(\sum_Q|Q|\big)^\frac{3}{4}.
\end{align*}
As we noted before, we obtain for the second factor $\sum_Q|Q|\lesssim R^2\ln R$.
For the first factor, we note 
\begin{align}\label{ao16}
\sum_{Q\;\mbox{level}\;r}(\mathbb{P}Q\;\mbox{exceptional})|Q|
=\mathbb{E}|\cup_{Q\;\mbox{exceptional level}\;r}Q|. 
\end{align}
Since $\cup_{Q\;\mbox{exceptional}}Q\subset E$,
we have that the sum in $r$ of the l.~h.~s. of (\ref{ao16}) is 
$\lesssim\mathbb{E}|E|\ln R$. Now (\ref{ao15}) follows from (\ref{ao12}).
\end{proof}

\begin{proof}[Proof of Lemma \ref{lem:lowerboundpathwise}]
\medskip
{\sc Step 1}. From one to two measures. We claim that for $R\gg 1$ the following inequality holds
\begin{equation}\label{eq:dualPP}
\mathbb{E}{\sup\cur{\int\zeta (d\mu - d\nu)
\mbox{}\quad\big|\;{\rm supp}\zeta\subset(0,R)^2,\;|\nabla\zeta|\le 1,\;
\int\zeta dx=0}}\gtrsim R^2\ln^\frac{1}{2}R.
\end{equation}
Indeed, write $\mathcal F=\cur{\zeta : {\rm supp}\zeta\subset(0,R)^2,\;|\nabla\zeta|\le 1,\;
\int\zeta dx=0 }$ and $S=\sup_{\zeta\in\mathcal F} \int \zeta d\mu,$ where the supremum has to be understood as an essential supremum. By basic results on essential suprema, there exists a countable subset $\{\zeta_n, n\geq 1\}\subset \mathcal F$ such that $S=\sup_{n\geq 1} \int \zeta_n d\mu$. Setting, $S_n=\max_{k\leq n} \int\zeta_k d\mu$ we have $S_n\nearrow S$ and by monotone convergence $\EE S_n\nearrow \EE S.$ Hence, there is $n$ such that $\EE S_n\ges R^2\ln^\frac12 R$  by Lemma \ref{lem:dualLP}. By definition of $S_n$, $S_n=\int \zeta d\mu$ for some $\mathcal F$-valued random variable $\zeta$ (that takes the finitely many values $\zeta_1, \dots, \zeta_n$), which is admissible in \eqref{eq:dualPP}. Clearly, $\zeta$ is measurable w.~r.~t. $\mu$ alone. However, $\zeta$ is measurably only dependent on $\mu$ and not on $\nu$. Since $\nu$ has intensity measure $dx$ we obtain by independence of $\mu$ and $\nu$
\begin{align*}
 \EE \sqa{\int \zeta (d\mu-d\nu)}& = \EE_\mu\sqa{\EE_\nu\sqa{\int \zeta (d\mu-d\nu)}} = \EE_\mu\sqa{\int \zeta d\mu - \int \zeta dx}\\
 &= \EE_\mu\sqa{\int \zeta d\mu}\ges R^2 \ln^\frac12 R
\end{align*}
which proves \eqref{eq:dualPP}. 

\medskip
{\sc Step 2}. Concentration around expectation. 
We claim that
\begin{equation}\label{eq:GRmunu}
S_R \bra{\mu, \nu} := \sup\cur{\int\zeta \bra{d\mu-d\nu}
\mbox{}\quad\big|\;{\rm supp}\zeta\subset(0,R)^2,\;|\nabla\zeta|\le 1,\;
\int\zeta dx=0}
\end{equation}
satisfies
\begin{align}\label{eq:inconlem}
\lim_{\substack{R \rightarrow \infty \\ \mbox{$R$ dyadic}}} \frac{1}{R^2 \ln^{\frac{1}{2}} R} \abs{S_R \bra{\mu, \nu} - \mathbb{E} S_R \bra{\mu, \nu}} = 0 \quad \P - \text{a.~s.}.
\end{align}
Indeed, by the triangle inequality, we split the statement into two:
\[
\abs{S_R \bra{\mu, \nu} - \mathbb{E} S_R \bra{\mu, \nu}} \leq \abs{S_R \bra{\mu, \nu} - \mathbb{E}_\mu S_R \bra{\mu, \nu}} + 
\abs{\EE_\mu S_R \bra{\mu, \nu} - \mathbb{E} S_R \bra{\mu, \nu}}.
\]
By a Borel-Cantelli argument it suffices to show that for any fixed $\eps>0$ and $R \gg 1$ the following statements hold
\begin{equation}\label{eq:PrInconcentratiomu}
\mathbb{P} \bra{ \frac{1}{R^2 \ln^\frac{1}{2}R} \abs{S_R \bra{\mu, \nu} - \mathbb{E}_\mu S_R \bra{\mu, \nu}} > \eps} \lesssim  \exp\bra{ - \frac{\eps^2}{4} \ln R},
\end{equation}
\begin{equation}\label{eq:PrInconcentrationu}
\mathbb{P} \bra{ \frac{1}{R^2 \ln^\frac{1}{2}R} \abs{\EE_\mu S_R \bra{\mu, \nu} - \mathbb{E} S_R \bra{\mu, \nu}} > \eps} \lesssim  \exp\bra{ - \frac{\eps^2}{4} \ln R}.
\end{equation}
We first turn to \eqref{eq:PrInconcentratiomu}. For $z \in \mathbb{R}^2$ we consider the difference operator 
\[
D_{z} S_R \bra{\mu, \nu} := S_R \bra{\mu + \delta_z, \nu} - S_R \bra{\mu, \nu}.
\]
Since the $\zeta$'s in the definition \eqref{eq:GRmunu} satisfy $\sup \abs{\zeta} \leq R$, we have
\begin{equation}\label{eq:lowerdiffopest}
\abs{D_{z} S_R \bra{\mu, \nu}} \leq \sup\cur{\abs{\zeta \bra{z}}
\mbox{}\big|\;{\rm supp}\zeta\subset(0,R)^2,\;|\nabla\zeta|\le 1,\;
\int\zeta dx=0} \leq R =: \beta.
\end{equation}
Hence,
\[
\int_{\R^2} \bra{D_{z} S_R \bra{\mu, \nu}}^2 dz \leq \int_{\bra{0,R}^2} R^2 dz \leq R^4=: \alpha^2.
\]
Applying \cite[Proposition 3.1]{Wu00} we obtain for $R \gg 1$
\[
\begin{split}
\mathbb{P} \bra{\abs{S_R \bra{\mu, \nu} - \mathbb{E}_\mu S_R \bra{\mu, \nu}} > \eps R^2 \ln^\frac{1}{2}R} & \lesssim \exp \bra{- \frac{\eps R^2 \ln^\frac12 R}{2 \beta} \ln\bra{1+ \frac{\beta \eps R^2 \ln^\frac12 R}{\alpha^2}}}\\
& = \exp \bra{-  \frac{\eps R \ln^{\frac{1}{2}} R}{2} \ln \bra{1 + \frac{\eps \ln^{\frac{1}{2}} R}{R}}} \\
& \lesssim  \exp \bra{- \frac{\eps^2}{4} \ln R}.
\end{split}
\]
The argument for \eqref{eq:PrInconcentrationu} is almost identical: For arbitrary $w \in \R^2$ we need to consider
\[
D_w \EE_\mu S_R \bra{\mu, \nu} = \EE_\mu S_R \bra{\mu, \nu + \delta_w} - \EE_\mu S_R \bra{\mu, \nu}.
\]
Because of $D_w \EE_\mu S_R \bra{\mu, \nu} = \EE_\mu D_w S_R \bra{\mu, \nu}$ we obtain as above
\[
\abs{D_w \EE_\mu S_R \bra{\mu, \nu}} = \abs{\EE_\mu D_w S_R \bra{\mu, \nu}} \leq R. 
\]
Then applying once more \cite[Proposition 3.1]{Wu00} with $\beta = R$ and $\alpha = R^2$ implies \eqref{eq:PrInconcentrationu}.
\end{proof}

\subsubsection{Upper Bound}

\begin{lemma}\label{Lem2}
Let $\mu$ denote the Poisson point process in $\mathbb{R}^2$ of unit intensity.
Then it holds for $R\gg 1$
\begin{equation}\label{eq:upperboundmean}
\mathbb{E}W_{(0,R)^2}^2(\mu,n)\lesssim R^2\ln R,
\end{equation}
where $n=\frac{\mu((0,R)^2)}{R^2}$ is the (random) number density.
\end{lemma}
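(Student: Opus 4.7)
The plan is to build a transport from $n\,dx$ to $\mu$ by a hierarchical dyadic scheme and to control the $L^2$ cost with a Burkholder-Davis-Gundy (BDG) argument, thereby avoiding the spurious $\log R$ that a naive triangle inequality would introduce when summing over scales.

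\textbf{Dyadic setup and per-scale cost.} I tile $(0,R)^2$ by dyadic sub-cubes at each dyadic scale $r\in\{1,2,\dots,R\}$; write $Q_r(x)$ for the dyadic cube of side $r$ containing $x$, and set $f_r(x):=\mu(Q_r(x))/r^2$. Then $f_R\equiv n$, and with respect to the filtration $\{\mathcal F_r\}$ generated by all dyadic counts at scales $\geq r$, the sequence $(f_r)$ is a reverse martingale: conditional on $\mu(P)=N$ for a parent $P$ of side $2r$, the four child counts are $\mathrm{Bin}(N,1/4)$ with conditional mean $N/4$. Within each such parent I construct a symmetric Monge-type transport $T_{r,P}$ from $f_{2r}\mres P$ to $f_r\mres P$, supported in $P$, with displacement $\lesssim r$ and cost
\[
\int_P |T_{r,P}(x)-x|^2\,dx \;\lesssim\; \frac{r^2}{\mu(P)\vee 1}\sum_{i=1}^4\bigl(\mu(Q_i)-\mu(P)/4\bigr)^2.
\]
Taking conditional expectation given $\mu(P)=N$ and using the binomial variance $3N/4$ of each $\mu(Q_i)$ yields $O(r^2)$ per parent, hence $O(R^2)$ in expectation per dyadic scale.

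\textbf{Assembly via Burkholder-Davis-Gundy.} Composing the $T_{r,P}$'s across scales produces a single transport $T$ from $n\,dx$ to $f_1\,dx$; pointwise in $x$ the total displacement decomposes as $T(x)-x=\sum_r \Delta_r(x)$, with $\Delta_r$ the scale-$r$ increment. Choosing each $T_{r,P}$ equivariantly under the reflections of $P$ that permute its four children forces $\Delta_r$ to be conditionally mean zero given $\mathcal F_{2r}$, so that the $\mathbb{R}^2$-valued fields $(\Delta_r)_r$ form a martingale difference sequence in $L^2((0,R)^2;\mathbb{R}^2)$. Hilbert-space BDG then gives
\[
\mathbb{E}\int |T-\mathrm{id}|^2\,dx \;=\; \mathbb{E}\Big\|\sum_r\Delta_r\Big\|_{L^2}^{\!2} \;\lesssim\; \sum_r \mathbb{E}\|\Delta_r\|_{L^2}^{\!2} \;\lesssim\; R^2\log R.
\]
A final within-unit-cube matching from $f_1\,dx$ to $\mu$ contributes at most $\mu((0,R)^2)=O(R^2)$ in expectation, concluding $\mathbb{E}\,W_{(0,R)^2}^2(\mu,n)\lesssim R^2\log R$.

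\textbf{Main obstacle.} The delicate step is arranging the per-scale transports $T_{r,P}$ so that the displacements $\Delta_r$ genuinely behave as martingale differences. The symmetric Monge prescription above is a natural candidate, but verifying the conditional mean-zero property $\mathbb{E}[\Delta_r\mid\mathcal F_{2r}]=0$ in an $L^2$-valued sense, and reconciling it with the degenerate parents $\mu(P)=0$ where the explicit formula is interpreted as vanishing, is the technical heart of the argument. Everything else (the binomial variance computation, the $O(R^2)$ per-scale bookkeeping, and the geometric summation) is standard.
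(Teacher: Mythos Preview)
Your strategy---dyadic decomposition, martingale structure across scales, and a Burkholder estimate to avoid the spurious extra $\log R$---is precisely the spirit of the paper's proof. But the mechanism you propose for the martingale-difference property does not work, and this is a genuine gap, not a technicality. Equivariance of $T_{r,P}$ under the symmetries of $P$ only forces the conditional mean $v(y):=\mathbb{E}[T_{r,P}(y)-y\mid \mathcal{F}_{2r}]$ to be an \emph{equivariant} vector field on $P$; it does not force $v\equiv 0$. Concretely, take $\mu(P)=1$: the Brenier map from the uniform measure on $P$ to the uniform measure on the single nonempty child $Q_i$ is the affine contraction $y\mapsto c_i+\tfrac{1}{2}(y-c)$ (with $c$ the center of $P$ and $c_i$ the center of $Q_i$), so averaging over the four equiprobable children gives $\mathbb{E}[T_{r,P}(y)-y]=-\tfrac{1}{2}(y-c)\not\equiv 0$. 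Hence the $\Delta_r$ are not martingale differences, the cross terms $\mathbb{E}\langle\Delta_r,\Delta_{r'}\rangle_{L^2}$ do not vanish, and the orthogonality underlying your $p=2$ BDG step collapses. The structural reason is that composing Monge maps is Lagrangian: the scale-$r$ displacement, evaluated at the already-transported trajectory point, is a \emph{nonlinear} function of the child counts, and there is no reason for its conditional mean to vanish at an off-center point.

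The paper sidesteps this by working in the Eulerian picture. After a stopping argument that keeps the dyadic densities in $[\tfrac12,2]$ (so that a crude bound handles $W^2(\mu,\lambda)$ on the stopped cubes), it invokes the Benamou--Brenier inequality $W^2(\lambda,n)\le 2\int|j|^2$ for any flux $j$ with $\nabla\cdot j=n-\lambda$ and no-flux boundary data; the density lower bound is what makes this available. The flux is then built \emph{additively}, $j(x)=-\sum_{Q\ni x}\nabla\phi_Q(x)$, where $\phi_Q$ solves the Neumann problem $-\Delta\phi_Q=n_Q-n_{Q'}$ on each child $Q'$. The crucial gain is that $\nabla\phi_Q(x)$ is \emph{linear} in the mean-zero increments $n_{Q'}-n_Q$, so its conditional expectation given the coarser counts vanishes for free; this is what makes the partial sums a genuine martingale in the scale parameter and lets Burkholder go through. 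Your Lagrangian composition destroys exactly this linear/mean-zero structure; any repair that restores it will effectively lead you back to the flux construction.
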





\begin{proof}
Clearly, the intuition mentioned at the beginning of Section \ref{sec:bounds} suggests to uncover a 
martingale structure. 
W.l.o.g.~we may assume that $R\in 2^\mathbb{N}$, and will consider the family of all
dyadic squares $Q$.
For such $Q$, we consider the number density
\begin{align}\label{ao30}
n_Q:=\frac{\mu \bra{Q}}{|Q|}.
\end{align}
Note that $n_Q|Q|\in\mathbb{N}_0$ is Poisson distributed with expectation $|Q|$.
Note that if, for a fixed point $x$, we consider the sequence of nested squares
$Q$ that contain $x$, the corresponding random sequence $n_Q$ is a martingale. 
We want to stop the dyadic subdivision just before the number density
leaves the interval $[\frac{1}{2},2]$ of moderate values.
This defines a scale: 
\begin{equation}\label{ao32}
r_*(x):= 2 \sup\{r \ | \ \mbox{is the sidelength of dyadic square}\:Q\ni x\;\mbox{with}\;
n_Q\not\in[\frac{1}{2},2]\}.
\end{equation}
%
Since on a square $Q$ of side length $r=\frac{1}{2}$
we have $n_Q\in 4\mathbb{N}_0$ we trivially obtain
\begin{align}\label{ao42}
r_*(x) \ge 1.
\end{align}
As we shall argue now, it follows from the properties
of the Poisson distribution that (the stationary) $r_*(x)$ is $O(1)$ with overwhelming probability,
in particular
\begin{align}\label{ao33}
\mathbb{E}r_*^4(x)\lesssim 1.
\end{align}
Indeed, by the concentration properties of the Poisson distribution we have for any $\rho \in 2^\N$
\[
\begin{split}
\P \bra{r_*(x) > \rho} & \leq \sum_{Q \ni x, r_Q \geq \rho} \P \bra{n_{Q_\rho} \notin \sqa{\frac12, 2}} \\
& \lesssim \sum_{r \geq \rho\;\mbox{dyadic}} \exp \bra{- C r^2} \lesssim \exp\bra{-C \rho^2}.
\end{split}
\]
Thus we may estimate
\[
\begin{split}
\EE r_*^4 (x) & = 4 \int_0^\infty \rho^{3} \P \bra{r_* (x)> \rho} d\rho  \lesssim \int_0^\infty \rho^{3} \exp \bra{-C \rho^2} d\rho \lesssim 1.
\end{split}
\]

\medskip

We now distinguish the case of $r_* \leq R$ on $(0,R)^2$ and its complement. In the last case, there exists a $y \in (0,R)^2$ such that $r_*(y) > R$, which by \eqref{ao32} means that there exists a dyadic cube $Q \ni y$ of sidelength $r_Q \ge R$ and $n_Q \notin \sqa{\frac12, 2}$, which in turn implies that $r_* > R$ on the entire $(0,R)^2$. Now fix a deterministic $y \in (0,R)^2$. Since $n_{(0,R)^2} R^2$ is the number of particles we use the brutal estimate of the transportation distance 
\begin{equation}\label{eq:uppbrutest}
W^2_{(0,R)^2} \bra{\mu, n_{(0,R)^2}} \leq n_{(0,R)^2} R^22R^2.
\end{equation} 
Since by assumption, $(0,R)^2 \subset (0, r_*(y))^2$, this yields $W^2_{(0,R)^2}$ $\left( \mu, \right.$ $\left. n_{(0,R)^2}\right)$ $\leq n_{(0,r_*(y))^2}$ $r_*^2 (y) 2 R^2$. Hence by definition of $r_*(y)$, see \eqref{ao32} we obtain $W^2_{(0,R)^2} \bra{\mu, n_{(0,R)^2}} \leq 2 r_*^2 (y) 2 R^2 $ $\lesssim r_*^4 (y)$.
By \eqref{ao33}, taking the expectation yields as desired 
\[
\EE W^2_{(0,R)^2} \bra{\mu, n_{(0,R)^2}} I \bra{r_*(y) > R} \lesssim 1,
\]
where $I \bra{r_*(y) > R}$ denotes the indicator function of the event $\cur{r_*(y) > R}$.

\medskip

In the more interesting case of $r_* \leq R$, we define a partition of $(0,R)^2$. A cube $Q_*$ is an element of the partition if and only if its sidelength $r_{Q_*}$ satisfies
\begin{equation}\label{eq:rQstar}
r_{Q_*}=\max\{r_*(x) : x \in Q_*\}.
\end{equation}
In words, this means that the number density of $Q_*$ is still within $[\frac12,2]$ but the number density of at least one of its four children leaves the interval $[\frac12,2]$. This implies the following reverse of \eqref{eq:rQstar}
\begin{equation}\label{eq:reverserstar}
\fint_{Q_*} r^2_{*} \geq \frac14 r^2_{Q_*}.
\end{equation}
Equipped with this partition, we define the density $\lambda$ by
\begin{align*}
\lambda=n_{Q_*}\quad\mbox{on}\;Q_*.
\end{align*}
We consider the transportation distance between $\mu$ and the measure $\lambda dx$.
By definition of $\lambda$, there exists a coupling where the mass is only
redistributed within the partition $\{Q_*\}$. This implies the inequality
\begin{align*}
W_{(0,R)^2}^2(\mu,\lambda)\le\sum_{Q_*}2r_{Q_{*}}^2 n_{Q_*}|Q_*|
\stackrel{\eqref{ao32},\eqref{eq:reverserstar}}{\le}16\sum_{Q_*} \int_{Q_*} r^2_{*} = 16\int_{(0,R)^2}r_*^2.
\end{align*}
Taking the expectation, by \eqref{ao33} and Jensen's inequality,
\begin{align*}
\EE W_{(0,R)^2}^2(\mu,\lambda) I \bra{r_*  \leq R}\lesssim 1.
\end{align*}
Hence in view of (\ref{ao33}) and the triangle inequality for the transportation
distance it remains to show
\begin{align}\label{ao38}
\mathbb E W_{(0,R)^2}^2(\lambda,n_{(0,R)^2}) I \bra{r_*  \leq R}\lesssim R^2\ln R.
\end{align}

\medskip

The purpose of the stopping (\ref{ao32}) is that we have the lower bound 
$\lambda, n_{(0,R)^2}\ge\frac{1}{2}$ on the two densities,
which implies the inequality
\begin{align}\label{ao41}
W_{(0,R)^2}^2(\lambda,n_{(0,R)^2})\le 2\int_{(0,R)^2}|j|^2
\end{align}
for all distributional solutions $j$ of 
\begin{align}\label{ao38bis}
\nabla\cdot j=n_{(0,R)^2}-\lambda\;\;\mbox{in}\;(0,R)^2,\quad
\nu\cdot j=0\;\;\mbox{on}\;\partial(0,R)^2.
\end{align}
Inequality (\ref{ao41}) can be easily derived from the Eulerian description
of optimal transportation (see for instance \cite[Proposition 1.1]{BenBre00} or \cite[Theorem 8.1]{Viltop}). Indeed, the couple $\bra{\rho, j}$, with $\rho = t n_{(0,R)^2} + \bra{1-t} \lambda \ge \frac12$ and $j$ satisfying $\eqref{ao38bis}$ is an admissible candidate for the Eulerian formulation of $W_{(0,R)^2}^2(\lambda,n_{(0,R)^2})$.

\medskip

We now construct a $j$:
For a dyadic square $Q$ with its four children $Q'$, we (distributionally) solve the Poisson equation with piecewise 
constant r.~h.~s.~and no-flux boundary data
\begin{align}\label{ao36}
-\Delta\phi_Q=n_Q-n_{Q'}\;\;\mbox{in}\;Q',\quad
\nu\cdot\nabla\phi_Q=0\;\;\mbox{on}\;\partial Q.
\end{align}
Note that (\ref{ao36}) admits a solution since the integral of the r.~h.~s.~over
$Q$ vanishes by definition of $\{n_Q\}$.
Since the no-flux boundary condition allows for concatenation of $-\nabla\phi_Q$
without creating singular contributions to the divergence, 
\begin{align}\label{ao39}
j(x):=-\sum_{Q\ni x,R \ge r_Q\ge 2r_{Q_*}\;\mbox{for}\; Q_* \ni x}\nabla\phi_Q(x)
\end{align}
which means that the sum\footnote{with the understanding that $j(x)=0$ when the sum is empty.} extends over all dyadic cubes $Q \subset \bra{0,R}^2$ containing $x$ and being strictly coarser than $\cur{Q_*}$, defines indeed a distributional solution of (\ref{ao38bis}).

\medskip

The Poincar\'e inequality gives the universal bound
\begin{align*}
\int_Q dx|\frac{1}{r_Q}\nabla\phi_Q|^2\lesssim|Q|\sum_{Q'\;\mbox{child of}\;Q}(n_{Q'}-n_Q)^2.
\end{align*}
Appealing to the trivial estimate $\sum_{Q'\mbox{child of}Q}(n_{Q'}-n_Q)^2$
$\lesssim$ $\sum_{Q'\;\mbox{child of}\;Q}$ $(n_{Q'}-1)^2$, and to the standard
estimate on the variance of the Poisson process, namely
$\mathbb{E}(n_Q-1)^2\lesssim\frac{1}{|Q|}$, we obtain
\begin{align}\label{ao40}
\mathbb{E}\int_Q|\nabla\phi_Q|^2\lesssim |Q|.
\end{align}

\medskip

Now comes the crucial observation: We momentarily fix a point $x$ and 
consider all squares $Q\ni x$. We note that $\nabla\phi_Q(x)$
depends on the Poisson point process only through $\{n_{Q'}\}$ where $Q'$ runs
through the children of $Q$. 
Moreover, since the expectation of the r.~h.~s.~of the Poisson equation
(\ref{ao36}) vanishes, also the expectation of $\nabla\phi_Q(x)$ vanishes.
Hence the sum
\begin{align*}
\sum_{Q\ni x,R \ge r_Q\ge r}\nabla\phi_Q(x)
\end{align*}
is a martingale in the scale parameter $r=R,\frac{R}{2},\frac{R}{4},\cdots$
w.~r.~t.~to the filtration generated by the $\{n_Q\}_{Q}$.
Since $r_* \geq 1$, we thus obtain by the Burkholder inequality \cite[Theorem 6.3.6]{Stroock}
%
%
%
\begin{align*}
\mathbb{E}\big|\sum_{Q\ni x,R \ge r_Q\ge 2r_{Q_*}\;\mbox{for}\; Q_* \ni x}\nabla\phi_Q(x)\big|^2
& \leq
\mathbb{E}\sup_{r \ge 1}\big|\sum_{Q\ni x,R \ge r_Q\ge r}\nabla\phi_Q(x)\big|^2 \\
& \lesssim 
\mathbb{E}\sum_{Q\ni x,R \ge r_Q\ge 1}|\nabla\phi_Q(x)|^2.
\end{align*}
Inserting definition (\ref{ao39}) into the l.~h.~s., using triangle inequality, integrating over $x\in(0,R)^2$,
and using (\ref{ao42}) on the r.~h.~s.~we obtain
\begin{align*}
\mathbb{E}\int_{(0,R)^2}|j|^2
\lesssim 
\sum_{Q,R \ge r_Q\ge 1}\mathbb{E}\int_Q|\nabla\phi_Q|^2. 
\end{align*}
Using (\ref{ao41}) on the l.~h.~s.~and (\ref{ao40}) on the r.~h.~s.~yields (\ref{ao38}).
\end{proof} 

\begin{remark}
The same argument in $d>2$ yields the bound $\EE W_{(0,R)^d}\bra{\mu,n}\les 1$. However, the interesting (well known) information from the proof is that in $d>2$ the main contribution comes from the term $W_{(0,R)^d}^2(\mu,\lambda)$ which collects the contributions on the small scales.
\end{remark}

\begin{proof}[Proof of Lemma \ref{lem:upperboundpathwise}.]
W.l.o.g.~it suffices to prove that for the Poisson point process $\mu$ there exists a random radius $r_*$ such that for any  dyadic radii $R \geq r_*$
\begin{equation}\label{eq:uppwlog}
\frac1{R^d} W^2_{(-R,R)^d} \bra{\mu, n} + \frac{R^2}{n} \bra{n-1}^2 \lesssim \ln R. 
\end{equation}
The statement will follow by choosing the maximum of this random radius and the one pertaining to $\nu$. 

\medskip

{\sc Step 1}. We claim that there exists a constant $C$ and a random radius $r_* < \infty$ a.~s. such that for any dyadic radii $R \geq r_*$
\begin{equation}\label{eq:upperWaspath}
\frac1{R^2} W_{(-R,R)^2}^2(\mu,n)\leq C \ln R.
\end{equation}
By Lemma \ref{Lem2} we may assume that \eqref{eq:upperboundmean} holds with $(0,R)^2$ replaced by $(0, 2R)^2$. Furthermore by stationarity of the Poisson point process we may assume that \eqref{eq:upperboundmean} holds in the form
\begin{equation}\label{eq:upperboundmean-RRbox}
\frac1{R^2} \EE W_{(-R,R)^2}^2(\mu,n)\lesssim \ln R,
\end{equation}
provided that $R \gg 1$. By \cite[Proposition 2.7]{GHO1} it follows that for any $\eps>0$, 
\[
\mathbb{P} \bra{ \frac{1}{R^2 \ln R}\abs{W_{(-R,R)^2}^2 \bra{\mu, n}- \mathbb{E} W_{(-R,R)^2}^2 \bra{\mu , n}} > \eps} \lesssim  \exp \bra{- C \eps \ln R}.
\]
Then \eqref{eq:upperWaspath} follows by arbitrarity of $\eps$ together with a Borel-Cantelli argument.

\medskip

{\sc Step 2}.
We show that there exists a constant $C$ and random radius $r_* < \infty$ a.~s. such that for any dyadic radii $R \geq r_*$
\begin{equation}\label{eq:ndatatermupper}
\frac{R^2}{n} \bra{n-1}^2 \leq C \ln R.
\end{equation}
Since for large $R \gg 1$ we have $\frac{\ln R}{R^2} \ll 1$ \eqref{eq:ndatatermupper} is equivalent to
\[
{R^2} \bra{n-1}^2 \lesssim \ln R.
\] 
Since $n 4 R^2$ is Poisson distributed with parameter $4 R^2$ by Cram\'er-Chernoff's bounds \cite[Theorem 1]{BouLuMa}
\[
\begin{split}
\P \bra{R^2 \bra{n-1}^2 > \ln R} & = \P \bra{\abs{n 4 R^2-  4 R^2} > C R \ln^\frac12 R}  \lesssim \exp \bra{- C \ln R}.
\end{split}
\]
Finally, by a Borel-Cantelli argument \eqref{eq:ndatatermupper} holds for any dyadic radii $R \geq r_*$.
\end{proof}

 \bibliographystyle{abbrv}

\bibliography{OT}
\end{document}